\newtcolorbox{breakablealgorithm}[1]{%
	enhanced,
	breakable,
	colback=white,
	colframe=white,      
	boxrule=0pt,         
	sharp corners,
	top=2mm,
	bottom=2mm,
	left=0pt,
	right=0pt,
	before upper={
		\refstepcounter{algorithm}
		\hrule\vspace{0.5ex}
		\textbf{Algorithm \thealgorithm}\quad #1
		\vspace{0.5ex}\hrule\vspace{1ex}
	},
	after upper={
		\vspace{1ex}\hrule
	}
}
\theoremstyle{definition}
\newtheorem*{example*}{Example}
\newcommand{\alphasum}{\alpha_1 \oplus \alpha_2 - c}
\DeclareMathOperator{\diag}{diag}
\newcommand*{\dop}[1]{\,\mathop{{\mathrm{d}#1}}}
\newcommand{\drm}{\mathrm{d}}
\newcommand{\etasum}{\eta_1 \oplus \eta_2}
\newcommand*{\norm}[2][{}]{\|#2\|_{#1}}
\DeclareMathAlphabet{\Mathbb}{U}{bbold}{m}{n}
\newcommand{\one}{\Mathbb{1}}
\newcommand{\phantomtop}{\phantom{\top}}
\newcommand{\proj}{P}
\newcommand{\scalarproduct}[3][{}]{\langle #2, \, #3 \rangle_{#1}}
\newcommand{\StdSmplx}{\mathfrak{S}_{n_1}}
\newcommand{\substitute}{\boldsymbol{\cdot}}
\newcommand{\zero}{\Mathbb{0}}
\newcommand{\Acal}{\mathcal{A}}
\newcommand{\Bcal}{\mathcal{B}}
\newcommand{\Dcal}{\mathcal{D}}
\newcommand{\Fcal}{\mathcal{F}}
\newcommand{\Gcal}{\mathcal{G}}
\newcommand{\Jcal}{\mathcal{J}}
\newcommand{\Mcal}{\mathcal{M}}
\newcommand{\Ncal}{\mathcal{N}}
\newcommand{\Pcal}{\mathcal{P}}
\newcommand{\Scal}{\mathcal{S}}
\newcommand{\Bfrak}{\mathfrak{B}}
\newcommand{\Mfrak}{\mathfrak{M}}
\newcommand{\Pfrak}{\mathfrak{P}}
\newcommand{\N}{\mathbb{N}}
\newcommand{\R}{\mathbb{R}}
\newcommand*{\Borel}[1]{\Bfrak(#1)}
\newcommand*{\Coupling}[2]{\Pi(#1,#2)}
\newcommand*{\Measure}[1]{\Mfrak(#1)}
\newcommand*{\Probability}[1]{\Pfrak(#1)}
\newcommand{\eg}{e.g.\ }
\newcommand{\wrt}{w.r.t.\ }
\newcommand{\ie}{i.e.,\ }
\numberwithin{equation}{section}
\newif\ifrevision
\let\oldtextcolor\textcolor
\renewcommand{\textcolor}[2]{%
	\ifrevision
	\oldtextcolor{#1}{#2}%
	\else
	#2%
	\fi
}
\title{Bilevel Optimization of the 
	Kantorovich Problem and its Quadratic Regularization \\
	Part III: The Finite-Dimensional Case}
\shorttitle{Bilevel Optimization of the Kantorovich Problem, Part III}
\author{Sebastian Hillbrecht\thanks{Sebastian Hillbrecht, Technische Universit\"at Dortmund, Fakult\"at f\"ur Mathematik, Lehrstuhl X, Vogelpothsweg 87, 44227 Dortmund, Germany, \email{sebastian.hillbrecht@tu-dortmund.de}, \orcid{0000-0003-4524-3420}.}}
\date{\ISOToday} 
\begin{document}

\maketitle

\begin{abstract}
	As the title suggests, this is the third paper in a series addressing bilevel optimization problems that are governed by the Kantorovich problem of optimal transport.
	These tasks can be reformulated as mathematical problems with complementarity constraints in the space of regular Borel measures. Due to the nonsmoothness that is introduced by the complementarity constraints, such problems are often regularized, for instance, using entropic regularization.
	In this series of papers, however, we apply a quadratic regularization to the Kantorovich problem. By doing so, we enhance its numerical properties while preserving the sparsity structure of the optimal transportation plan as much as possible.
    While the first two papers in this series focus on the well-posedness of the regularized bilevel problems and the approximation of solutions to the bilevel optimization problem in the infinite-dimensional case, in this paper, we reproduce these results for the finite-dimensional case and present findings that go well beyond the ones of the previous papers and pave the way for the numerical treatment of the bilevel problems.
\end{abstract}


\section{Introduction}\label{sc:INTRO}
The Kantorovich problem (of optimal transport) is given by
\begin{equation}
\label{pr:INTRO-Kant}
\tag{K}
	\begin{array}{rl}
		\inf\limits_{\pi} & \displaystyle  \int_{\Omega_1 \times \Omega_2} c \dop{\pi} \\[1em]
		\text{s.t.} & \pi \in \Coupling{\mu_1}{\mu_2}, \quad \pi \geq 0.
	\end{array}
\end{equation}
In the above, $\Omega_1 \subset \R^{d_1}$ and $\Omega_2 \subset \R^{d_2}$ are (locally) compact domains, $c \colon \Omega_1 \times \Omega_2 \to \R$ is a (lower semi-)continuous cost function that is bounded from below, and
\begin{equation*}
	\Coupling{\mu_1}{\mu_2}
	\coloneqq \{
		\pi \in \Measure{\Omega_1 \times \Omega_2} \colon
		{\proj_1}_{\#} \pi = \mu_1, ~ {\proj_2}_{\#} \pi = \mu_2
	\}
\end{equation*}
denotes the set of transport plans between the source marginal $\mu_1 \in \Measure{\Omega_1}$ and the target marginal $\mu_2 \in \Measure{\Omega_2}$. Already Kantorovich himself knew that this problem is well-posed, see \cite{kantorovich1942translocation}.
In the first two parts of this series of papers, \cite{hillbrecht2022bilevel1,hillbrecht2022bilevel2}, we treated the Kantorovich problem as the lower-level problem of a general bilevel optimization problem, \ie we investigated problems of the form
\begin{equation}
\label{pr:INTRO-BilevelKant}
\tag{BK}
	\begin{array}{rl}
		\inf\limits_{\pi, \mu_1} & \Jcal(\pi, \mu_1) \\[0.5em]
		\text{s.t.} & \pi \in \Measure{\Omega_1 \times \Omega_2}, \quad \mu_1 \in \Measure{\Omega_1}, \\[0.35em]
		& \mu_1 \geq 0, \quad \norm[\Measure{\Omega_1}]{\mu_1} = \norm[\Measure{\Omega_2}]{\mu_2^\drm}, \\[0.5em]
		& \pi ~ \text{solves \cref{pr:INTRO-Kant} w.r.t.} ~ \mu_1, ~ \mu_2^\drm, ~ \text{and} ~ c_\drm,
	\end{array}
\end{equation}
where $\Jcal \colon \Measure{\Omega_1 \times \Omega_2} \times \Measure{\Omega_1} \to \R \cup \{+\infty\}$ is an arbitrary weak$^*$ lower semicontinuous objective functional and $\mu_2^\drm \in \Measure{\Omega_2}$ \textcolor{cyan}{as well as} $c_\drm \in C(\Omega_1 \times \Omega_2)$ are given (and fixed) data. Depending on the choice of $\Jcal$, \cref{pr:INTRO-BilevelKant} is suited to model different tasks. For instance, if we consider a tracking-type objective of the form
\begin{equation*}
	\Jcal(\pi, \mu_1)
	= \norm[\Measure{D}]{\pi - \pi_\drm} + \norm[\Measure{D_1}]{\mu_1 - \mu_1^\drm},
\end{equation*}
where $D \subset \Omega_1 \times \Omega_2$ and $D_1 \subset \Omega_1$ are (open) observation domains and $\pi_\drm \in \Measure{D}$ \textcolor{cyan}{as well as} $\mu_1^\drm \in \Measure{D_1}$ are given data, then the bilevel problem \cref{pr:INTRO-BilevelKant} turns into the task of recovering an (unknown) transportation process from \textcolor{cyan}{perhaps} incomplete and/or noisy observations of the transport plan and the source marginal.

This is a special case of an inverse problem that is governed by the Kantorovich problem. While recovering certain properties of a transportation process from observations is not a new challenge, as a reference we only mention \cite{stuart2020inverse,chiu2022discrete,andrade2024sparsistency} and the references therein, to the author's best knowledge the above approach is unique as it involves the solution of a single optimization problem, whereas all of the mentioned approaches rely on Bayesian methods. 

Another possible application of the general bilevel problem \cref{pr:INTRO-BilevelKant} is given if one considers the compact domains $\Omega_1 = \Omega_2 \eqqcolon \Omega_* \subset \R^{d_*}$, the prior $\mu_2^\drm \in \Probability{\Omega_*}$, the cost $c_\drm(x, y) = \norm{x-y}^\rho$, $\rho > 1$, and the objective functional
\begin{equation*}
	\Jcal(\pi, \mu_1)
	= \frac12 \norm[Y]{G \mu_1 - y_\drm} + \nu \int_{\Omega_* \times \Omega_*} c_d \dop{\pi},
\end{equation*}
\textcolor{cyan}{in which} $G$ is a compact linear operator \textcolor{cyan}{that maps} the measure space $\Measure{\Omega_*}$ onto some Banach space $Y$, $y_\drm \in Y$ is a given observation, and $\nu > 0$ is a weighting parameter.

In this setting, \cref{pr:INTRO-BilevelKant} turns out to be equivalent to the \emph{Wasserstein(-regularized) inverse problem}
\begin{equation}
\label{pr:INTRO-WasserInvProb}
\tag{WI$_\rho$}
	\begin{array}{rl}
		\inf\limits_{\mu_1} & \frac12 \norm[Y]{G \mu_1 - y_\drm} + \nu W_\rho(\mu_1, \mu_2^\drm)^\rho \\[0.5em]
		\text{s.t.} & \mu_1 \in \Probability{\Omega_*},
	\end{array}
\end{equation}
where
\begin{equation*}
	W_\rho (\mu_1, \mu_2)
	\coloneqq \min_{\substack{\theta \in \Coupling{\mu_1}{\mu_2}, \\ \theta \geq 0}} \left( \int_{\Omega_* \times \Omega_*} \norm{x - y}^\rho \dop{\theta(x, y)} \right)^{\frac1\rho}
\end{equation*}
denotes the well-known $\rho$-Wasserstein distance between two marginals $\mu_1, \mu_2 \in \Probability{\Omega_*}$.

This is a linear inverse problem on a probability space with a weighted regularization term added to its target functional. Again, linear inverse problems on measure spaces have been extensively studied in the past, see \eg \cite{casas2012approximation,casas2014optimal,casas2019optimal,clason2021entropic,pieper2013priori}, just to mention a few. Still, \textcolor{cyan}{to the author's best knowledge}, the above optimization \textcolor{cyan}{problem} is unique \textcolor{cyan}{in that} it considers a regularization \wrt the Wasserstein distance instead of the Radon norm. We expect this regularization to be advantageous, because the Wasserstein distance metrizes the weak convergence of probability measures (on Polish spaces), see \eg \cite[Chapter 6]{villani2009optimal}.

Even though the problem proposed in \cref{pr:INTRO-WasserInvProb} is a convex optimization problem, it is a nontrivial task to actually solve such kind of problems. This is not only due to the potential complexity of evaluating $G$, but also to the fact that evaluating the $\rho$-Wasserstein distance involves solving a nonlinear optimization problem on $\Probability{\Omega_* \times \Omega_*}$. The latter task is subject to the ``curse of dimensionality'', meaning that the computational complexity \textcolor{cyan}{is likely to increase drastically as higher approximation quality is pursued}. In \cite{hillbrecht2022bilevel1, hillbrecht2022bilevel2}, we took advantage of a quadratic Tikhonov regularization to improve the computational properties of the Kantorovich problem \cref{pr:INTRO-Kant} and, \textcolor{cyan}{in turn}, by both replacing the lower-level Kantorovich problem by its $L^2$ regularized counterpart and interposing a smoothing of the involved variables, also the properties of the general bilevel problem \cref{pr:INTRO-BilevelKant}. We have shown that both the non-regularized and the regularized bilevel problems are well-posed and that, under some mild regularity conditions on the domains and the given data, by driving the regularization and smoothing parameters to zero, we can approximate certain solutions to the general bilevel problem by means of solutions to the regularized bilevel problems. This approach has been demonstrated to reduce computational complexity by circumventing the "curse of dimensionality." However, this reduction in complexity often comes at the expense of structural properties of the bilevel optimization problem in question. In particular, the regularized version of the Wasserstein inverse problem \cref{pr:INTRO-WasserInvProb} will most likely no longer be a convex problem, see also the discussion in \cite[Section 4.2]{hillbrecht2022bilevel2} for the case where $G$ is the solution operator of an elliptic PDE.

In the same spirit, the present paper deals with a discretized version of the Kantorovich problem, which is also known as the Hitchcock problem (\textcolor{cyan}{or classical transportation problem}), see problem \cref{pr:PROB-Hitch} below. Similar to the infinite-dimensional case of the Kantorovich problem, we concern ourselves with a bilevel problem that has the Hitchcock problem as one of its constraints, see problem \cref{pr:PROB-BilevelHitch} below. Bilevel optimization problems with linear lower-level structure have been studied extensively in the literature (see, \eg \cite{mehlitz2021note} and the references therein). In such "fully linear" settings, concepts like partial calmness can be employed to establish equivalences to penalized single-level problems. While these approaches are powerful, the focus of
this paper lies elsewhere: we are interested in the interaction between the geometry of transport constraints and solution existence, especially in cases where the Hitchcock problem may admit non-unique solutions and where standard reformulations fail to capture this complexity. Exploring connections with penalty-based methods
remains an interesting direction for future work.

In the present paper, we introduce a quadratic regularization to improve the numerical properties of both the Hitchcock problem and the corresponding general bilevel problem. However, in contrast to the case of the Kantorovich problem and owing to the finite-dimensional character of the Hitchcock problem, there is no additional smoothing of the variables involved. We replicate the results of the previous papers and in particular improve the results concerning the existence of recovery sequences (which ensure the upper-bound condition in the framework of $\Gamma$-convergence). Furthermore, we introduce an additional regularization of the dual problem of the regularized Hitchcock problem, which allows us to compute (sub-)derivatives of the regularized marginal-to-transport plan mapping. In the case that the bilevel objective functional is sufficiently smooth, we then may employ a nonsmooth optimization algorithm to \textcolor{cyan}{(heuristically)} solve the bilevel problem. In some sense, the present paper is meant to be the basis for the numerical treatment of (discretized) bilevel Kantorovich problems such as the transportation identification problem or the Wasserstein inverse problem \cref{pr:INTRO-WasserInvProb} \textcolor{cyan}{defined} above.

The present paper is structured as follows: In \cref{sc:PROB}, we derive the so-called Hitchcock problem as a special case from the infinite-dimensional Kantorovich problem and address the existence of solutions to both itself as well as the corresponding general bilevel problem. We then introduce a quadratic regularization to the Hitchcock problem's target function, which we, in turn, use to regularize the bilevel problem. Furthermore, we show that (in the presence of a recovery sequence), for a sequence of vanishing regularization parameters, any cluster point of the sequence of regularized bilevel solutions is a solution to the general bilevel problem. We then conclude the section with an explicit construction of such a recovery sequence. \cref{sc:DERIV} introduces a further quadratic regularization of the dual problem of the regularized Hitchcock problem. Thereby, we can define a regularized marginal-to-transport-plan mapping, whose differentiability properties \textcolor{cyan}{are investigated} in detail and which forms the basis of the numerical treatment of the regularized bilevel problems. Finally in \cref{sc:NUM}, we present the results of some first numerical tests to verify our findings from the previous sections.

As a disclaimer, most of the results of the present paper can be found, reasonably more detailed, in Part II of the doctoral thesis \cite{hillbrecht2024quadratic}. The present work nevertheless provides added value in that we shorten some of the arguments and provide a different perspective on some of the results.


\section{The Discrete Kantorovich Problem aka Hitchcock's Problem}\label{sc:PROB}
Given $n_1, n_2 \in \N$, we consider the finite sets $\Omega_1 = \{1, \dots, n_1\}$ and $\Omega_2 = \{1, \dots, n_2\}$, which we both endow with the discrete topology. We abbreviate their Cartesian product by $\Omega \coloneqq \Omega_1 \times \Omega_2$. Carrying the discrete topology, the Borel sigma algebras $\Borel{\Omega_1}$, $\Borel{\Omega_2}$, and $\Borel{\Omega}$ are simply the power sets $\Pcal(\Omega_1)$, $\Pcal(\Omega_2)$, and $\Pcal(\Omega)$, respectively.

The above setting implies the isometric isomorphisms $\Measure{\Omega_i} \cong \R^{n_i}$, $i = 1, 2$, as well as $\Measure{\Omega}, C(\Omega) \cong \R^{n_1 \times n_2}$. Consequently, any regular Borel measure $\mu_i \in \Measure{\Omega_i}$, $i = 1, 2$, can be represented by some vector $\vec{\mu}_i \in \R^{n_i}$ \textcolor{cyan}{which} satisfies $\norm[\Measure{\Omega_i}]{\mu_i} = \norm[1]{\vec{\mu}_i}$. Of course, the same holds \textcolor{cyan}{for} the elements of $\Measure{\Omega}$ and $C(\Omega)$, \ie for every measure $\theta \in \Measure{\Omega}$ and every function in $f \in C(\Omega)$, we can always find matrix representations $M, F \in \R^{n_1 \times n_2}$ such that $\norm[\Measure{\Omega}]{\theta} = \norm[1]{M} = \sum_{(i_1, i_2) \in \Omega} |M_{i_1, i_2}|$ and $\norm[C(\Omega)]{f} = \norm[\infty]{F} = \max_{(i_1, i_2) \in \Omega} |F_{i_1, i_2}|$. To ease the notation, in the following we will use the same symbol for elements of the measure spaces / function spaces and their representing vectors and matrices.

With this convention, we find that the Kantorovich problem \cref{pr:INTRO-Kant} from \cref{sc:INTRO} is equivalent to the problem
\begin{equation}
\label{pr:PROB-Hitch}
\tag{H}
	\begin{array}{rl}
		\inf\limits_{\pi} & \scalarproduct[F]{c}{\pi} \\[0.5em]
		\text{s.t.} & \pi \in \R^{n_1 \times n_2}, \quad \pi \geq 0, \\[0.25em]
		& \pi \one = \mu_1, \quad \pi^\top \one = \mu_2.
	\end{array}
\end{equation}
This standard linear problem is known as the \emph{Hitchcock problem} (\textcolor{cyan}{or classical transportation problem}) and finds widespread application throughout different fields of mathematics and economics, see \eg \cite{ford2015flows} and the references therein. In the above and \textcolor{cyan}{in} the rest of this paper, the symbol $\one$ refers to the vector (of any dimension) that consists only of ones, \ie $\one = (1, \dots, 1)^\top$. Moreover, $\scalarproduct[F]{\substitute}{\substitute}$ denotes the Frobenius scalar product.

The Hitchcock problem is the problem of minimizing a continuous target function over a compact feasible set. We therefore immediately receive the following result:

\begin{theorem}
\label{th:PROB-HitchSolu}
	For any pair of compatible marginals $\mu_1 \in \R^{n_1}$ and $\mu_2 \in \R^{n_2}$ with $\mu_1, \mu_2 \geq 0$ and $\mu_1^\top \one = \mu_2^\top \one$ as well as for any cost matrix $c \in \R^{n_1 \times n_2}$, the Hitchcock problem \cref{pr:PROB-Hitch} has at least one optimal solution.
\end{theorem}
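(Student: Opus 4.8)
The plan is to invoke the Weierstrass extreme value theorem, as already suggested by the remark preceding the statement: a continuous function attains its infimum on a nonempty compact set. Accordingly, I would split the argument into three parts, namely showing that the feasible set
\[
	\Fcal \coloneqq \{ \pi \in \R^{n_1 \times n_2} : \pi \geq 0, ~ \pi \one = \mu_1, ~ \pi^\top \one = \mu_2 \}
\]
is (i) nonempty and (ii) compact, and that (iii) the objective $\pi \mapsto \scalarproduct[F]{c}{\pi}$ is continuous. Since the objective is linear in $\pi$, part~(iii) is immediate, and the actual work is of a purely geometric nature.

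The \emph{key step} is the nonemptiness in part~(i), which is precisely where the compatibility condition $\mu_1^\top \one = \mu_2^\top \one$ enters. Writing $m \coloneqq \mu_1^\top \one = \mu_2^\top \one \geq 0$ for the common total mass, I would distinguish two cases. If $m = 0$, then $\mu_1 = 0$ and $\mu_2 = 0$ follow from nonnegativity, and $\pi = 0$ is feasible. If $m > 0$, I would exhibit the product (independent) coupling $\pi_{ij} \coloneqq (\mu_1)_i (\mu_2)_j / m$. This plan is nonnegative, and a direct computation of its row and column sums, using $\sum_j (\mu_2)_j = m$ and $\sum_i (\mu_1)_i = m$, shows $\pi \one = \mu_1$ and $\pi^\top \one = \mu_2$; hence $\pi \in \Fcal$ and $\Fcal \neq \emptyset$.

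For part~(ii), compactness follows from the Heine--Borel theorem once $\Fcal$ is shown to be closed and bounded in $\R^{n_1 \times n_2}$. Closedness is clear, as $\Fcal$ is the intersection of the closed nonnegative orthant with the closed affine subspace cut out by the two marginal constraints. Boundedness follows from the marginal constraints together with nonnegativity: for every feasible $\pi$ and every index pair $(i,j)$ one has $0 \leq \pi_{ij} \leq \sum_{k} \pi_{ik} = (\mu_1)_i \leq m$, so all entries of $\pi$ lie in the interval $[0, m]$. With (i)--(iii) established, Weierstrass yields the existence of a minimizer, which completes the proof.

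I do not anticipate a genuine obstacle here; the only point requiring any care is making the nonemptiness construction explicit and verifying that it respects \emph{both} marginals simultaneously, which is exactly what the mass-balance condition $\mu_1^\top \one = \mu_2^\top \one$ guarantees.
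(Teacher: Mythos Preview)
Your proposal is correct and follows exactly the approach the paper indicates: the paper simply remarks that the Hitchcock problem minimizes a continuous target function over a compact feasible set and states the theorem without further proof. Your argument just fills in the routine details (nonemptiness via the product coupling, closedness and boundedness of the feasible set) that the paper leaves implicit.
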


Analogously to the case of the infinite-dimensional bilevel Kantorovich problem, which we introduced in \cref{sc:INTRO} and discussed in detail in \textcolor{cyan}{the first two papers} \cite{hillbrecht2022bilevel1,hillbrecht2022bilevel2}, in this paper we are interested in a bilevel problem that is governed by the finite-dimensional Hitchcock problem.

Let $\Jcal \colon \R^{n_1 \times n_2} \times \R^{n_1} \to \R$ be some given lower semicontinuous objective function that is bounded on bounded sets, \ie for all $M > 0$ it holds that
\begin{equation*}
	\sup_{\norm{(\pi, \mu_1)} < M} \Jcal(\pi, \mu_1) < \infty.
\end{equation*}
Furthermore, consider some fixed target marginal $\mu_2^\drm \in \R^{n_2}$, satisfying $\mu_2^\drm \geq 0$, \textcolor{cyan}{as well as} some fixed cost matrix $c_\drm \in \R^{n_1 \times n_2}$. For the \textcolor{cyan}{rest} of this paper, we are \textcolor{cyan}{mainly} interested in the \emph{bilevel Hitchcock problem} that is given by
\begin{equation}
\tag{BH}
\label{pr:PROB-BilevelHitch}
	\begin{array}{rl}
		\displaystyle \inf_{\pi, \mu_1} & \Jcal(\pi, \mu_1) \\[0.5em]
		\text{s.t.} & \pi \in \R^{n_1 \times n_2}, \quad \mu_1 \in \R^{n_1}, \\[0.25em]
		& \mu_1 \geq 0, \quad \mu_1^\top \one = {\mu_2^\drm}^\top \one, \\[0.5em]
		& \pi ~ \text{solves \cref{pr:PROB-Hitch} w.r.t.} ~ \mu_1, ~ \mu_2^\drm, ~ \text{and} ~ c_\drm.
	\end{array}
\end{equation}
The bilevel Hitchcock problem \cref{pr:PROB-BilevelHitch} can be seen as a discretization of the bilevel Kantorovich problem \cref{pr:INTRO-BilevelKant}. It will therefore form the basis for the numerical treatment of problems such as the transportation identification problem as well as the Wasserstein inverse problem \cref{pr:INTRO-WasserInvProb}, which \textcolor{cyan}{were} briefly motivated in \cref{sc:INTRO} and thoroughly discussed in \cite[Section 4]{hillbrecht2022bilevel2}.

\begin{lemma}
\label{lm:PROB-FeasSetComp}
	For any $\mu_2 \in \R^{n_2}$ and any $c \in \R^{n_1 \times n_2}$, the set
	\begin{equation*}
		\Fcal
		\coloneqq
		\bigl\{
			(\pi, \mu_1) 
			\colon \mu_1 \geq 0, ~ \mu_1^\top \one = \mu_2^\top \one, ~ \pi \, \text{solves \cref{pr:PROB-Hitch} w.r.t.} \, \mu_1, \, \mu_2, \, c
		\bigr\}
	\end{equation*}
	is non-empty and compact.
\end{lemma}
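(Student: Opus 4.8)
The plan is to deduce compactness from the Heine--Borel theorem by showing that $\Fcal$ is both bounded and closed in $\R^{n_1 \times n_2} \times \R^{n_1}$, and to settle non-emptiness directly. For the latter, I would use that $\mu_2 \geq 0$ (which is implicit, $\mu_2$ being a marginal, and is in any case necessary for $\Fcal$ to be non-empty): then $m \coloneqq \mu_2^\top \one \geq 0$, and any $\mu_1 \geq 0$ with $\mu_1^\top \one = m$ (say $\mu_1 = \tfrac{m}{n_1}\one$) is admissible, so Theorem~\ref{th:PROB-HitchSolu} provides an optimal $\pi$ of \eqref{pr:PROB-Hitch} with respect to $\mu_1$, $\mu_2$, $c$, whence $(\pi, \mu_1) \in \Fcal$. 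Boundedness is then read off from the marginal constraints: every $(\pi, \mu_1) \in \Fcal$ satisfies $\mu_1 \geq 0$ with $\mu_1^\top \one = m$, so $\norm[1]{\mu_1} = m$, and $\pi \geq 0$ with $\pi \one = \mu_1$ gives $\norm[1]{\pi} = \sum_{i_1, i_2} \pi_{i_1, i_2} = \mu_1^\top \one = m$.

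The substantive part is closedness. I would take a sequence $(\pi_k, \mu_{1,k}) \subset \Fcal$ converging to some $(\pi, \mu_1)$ and first pass the closed constraints $\mu_{1,k} \geq 0$, $\mu_{1,k}^\top \one = m$, $\pi_k \geq 0$, $\pi_k \one = \mu_{1,k}$, and $\pi_k^\top \one = \mu_2$ to the limit. This shows $\mu_1 \geq 0$, $\mu_1^\top \one = m$, and $\pi \in \Coupling{\mu_1}{\mu_2}$ with $\pi \geq 0$, so $\pi$ is feasible for \eqref{pr:PROB-Hitch} with respect to the limiting data; in particular $\scalarproduct[F]{c}{\pi} \geq v(\mu_1)$, where $v(\mu_1)$ denotes the optimal value of \eqref{pr:PROB-Hitch}. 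It remains to prove the reverse inequality, i.e.\ that $\pi$ is actually optimal and hence $(\pi, \mu_1) \in \Fcal$.

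This optimality step is the main obstacle, and it is exactly where Lemma~\ref{lm:PROB-GluedTransPlanConverge} is designed to enter. I would choose an optimal coupling $\bar\pi$ for \eqref{pr:PROB-Hitch} with respect to $\mu_1$, $\mu_2$, $c$ (existing by Theorem~\ref{th:PROB-HitchSolu}). Since $\mu_{1,k} \to \mu_1$, Lemma~\ref{lm:PROB-GluedTransPlanConverge} yields nonnegative couplings $\tilde\pi_k \in \Coupling{\mu_{1,k}}{\mu_2}$ with $\tilde\pi_k \to \bar\pi$. As $\tilde\pi_k$ is feasible and $\pi_k$ optimal for \eqref{pr:PROB-Hitch} with respect to $\mu_{1,k}$, we have $\scalarproduct[F]{c}{\pi_k} \leq \scalarproduct[F]{c}{\tilde\pi_k}$; letting $k \to \infty$ and using continuity of the Frobenius product gives $\scalarproduct[F]{c}{\pi} \leq \scalarproduct[F]{c}{\bar\pi} = v(\mu_1)$. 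Combined with the feasibility bound $\scalarproduct[F]{c}{\pi} \geq v(\mu_1)$, this forces equality, so $\pi$ solves \eqref{pr:PROB-Hitch} and $(\pi, \mu_1) \in \Fcal$.

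Conceptually, everything except the last step is elementary (closed constraints passing to limits, mass conservation, Heine--Borel). What is genuinely needed is the upper semicontinuity $\limsup_{k} v(\mu_{1,k}) \leq v(\mu_1)$ of the value function along $\mu_{1,k} \to \mu_1$, and Lemma~\ref{lm:PROB-GluedTransPlanConverge} supplies precisely the recovery sequence $\tilde\pi_k$ that delivers it; without this gluing construction the limit $\pi$ could a priori fail to be optimal even while remaining feasible. Thus $\Fcal$ is closed, bounded, and non-empty, and therefore compact.
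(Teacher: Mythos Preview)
Your proof is correct and follows essentially the same route as the paper: non-emptiness from Theorem~\ref{th:PROB-HitchSolu}, boundedness from the marginal constraints, and closedness by passing the linear constraints to the limit and then invoking Lemma~\ref{lm:PROB-GluedTransPlanConverge} to build a recovery sequence that certifies optimality of the limit coupling. Your write-up is somewhat more explicit (constructing a concrete $\mu_1$, introducing the value function $v(\mu_1)$, and noting the implicit assumption $\mu_2 \geq 0$), but the argument is the same.
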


\begin{proof}
	The non-emptiness of $\Fcal$ is guaranteed by \cref{th:PROB-HitchSolu}. Its boundedness follows from the linear constraints of \cref{pr:PROB-Hitch}. To see that it is also closed, let us consider a sequence $(\pi_k, \mu_{1,k})_{k \in \N} \subset \Fcal$ with $(\pi_k, \mu_{1,k}) \to (\pi, \mu_1)$ as $k \to \infty$.
	The optimality system of \cref{pr:PROB-Hitch} \wrt $\mu_{1, k}$, $\mu_2$, and $c$ is given by
	\begin{align}
	\label{eq:PROB-HitchOptSys-a} \pi_k \one = \mu_{1,k}, \quad \pi_k^\top \one = \mu_2, \quad \pi_k &\geq 0, \\
	\label{eq:PROB-HitchOptSys-b} c- \alpha_{1,k} \oplus \alpha_{2,k} &\geq 0, \\
	\label{eq:PROB-HitchOptSys-c} (c - \alpha_{1,k} \oplus \alpha_{2,k}, \pi_k)_F &= 0,
	\end{align}
	with $\alpha_{1,k} \in \R^{n_1}$ and $\alpha_{2,k} \in \R^{n_2}$, see \cite[Theorem 2]{flood1953hitchcock}. Owing to the convergence $\pi_k \to \pi$ as $k \to \infty$, if $\pi_{i, j} > 0$, then also $(\pi_k)_{i, j} > 0$ for all $k$ large enough. Thus, if $\alpha_{1,k}$ and $\alpha_{2,k}$ satisfy \cref{eq:PROB-HitchOptSys-b} and \cref{eq:PROB-HitchOptSys-c} \wrt $c$ and $\pi_k$, they satisfy the same equalities \wrt $c$ and $\pi$. This together with the feasibility of the limit $\pi$ for \cref{pr:PROB-Hitch} \wrt $\mu_1$ and $\mu_2$, which can be obtained by passing to the limit in \cref{eq:PROB-HitchOptSys-a}, shows that, for all $k$ large enough, $\alpha_{1,k}$ and $\alpha_{2,k}$ solve the optimality system for $\pi$. Since the optimality system is sufficient for optimality, this proves the claim.
\end{proof}

\cref{lm:PROB-FeasSetComp} in conjunction with the presupposed lower semicontinuity of the bilevel objective immediately yields the existence of an optimal solution for the bilevel Hitchcock problem:

\begin{theorem}
\label{th:PROB-BilevelHitchSolu}
	For any lower semicontinuous objective function $\Jcal$ and any given data $\mu_2^\drm$ and $c_\drm$, the bilevel Hitchcock problem \cref{pr:PROB-BilevelHitch} has at least one optimal solution.
\end{theorem}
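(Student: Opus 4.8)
The plan is to invoke the direct method of the calculus of variations, using the structural result established in Lemma \ref{lm:PROB-FeasSetComp}. The key observation is that the bilevel Hitchcock problem \eqref{pr:PROB-BilevelHitch} is precisely the minimization of $\Jcal$ over the feasible set
\begin{equation*}
	\Fcal = \bigl\{ (\pi, \mu_1) \colon \mu_1 \geq 0, ~ \mu_1^\top \one = {\mu_2^\drm}^\top \one, ~ \pi \, \text{solves \eqref{pr:PROB-Hitch} w.r.t.} \, \mu_1, \, \mu_2^\drm, \, c_\drm \bigr\},
\end{equation*}
which is exactly the set $\Fcal$ from the lemma with the particular data $\mu_2 = \mu_2^\drm$ and $c = c_\drm$.

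First I would note that Lemma \ref{lm:PROB-FeasSetComp} guarantees that $\Fcal$ is non-empty and compact. Since $\Fcal$ is non-empty, the infimum $\inf_{(\pi,\mu_1) \in \Fcal} \Jcal(\pi, \mu_1)$ is taken over a non-empty set; moreover, because $\Fcal$ is bounded and $\Jcal$ is by assumption bounded on bounded sets, this infimum is finite (in particular, it is not $-\infty$). Then I would take an infimizing sequence $(\pi_k, \mu_{1,k})_{k \in \N} \subset \Fcal$ with $\Jcal(\pi_k, \mu_{1,k}) \to \inf_{\Fcal} \Jcal$.

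By the compactness of $\Fcal$, this sequence admits a subsequence (not relabeled) converging to some limit $(\bar\pi, \bar\mu_1) \in \Fcal$. The lower semicontinuity of $\Jcal$ then yields
\begin{equation*}
	\Jcal(\bar\pi, \bar\mu_1)
	\leq \liminf_{k \to \infty} \Jcal(\pi_k, \mu_{1,k})
	= \inf_{(\pi,\mu_1) \in \Fcal} \Jcal(\pi, \mu_1).
\end{equation*}
Since $(\bar\pi, \bar\mu_1) \in \Fcal$, the reverse inequality holds trivially, so $(\bar\pi, \bar\mu_1)$ attains the infimum and is therefore an optimal solution of \eqref{pr:PROB-BilevelHitch}.

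I do not anticipate any genuine obstacle here, as all the heavy lifting has already been done: the compactness of the feasible set, which relies on the gluing construction of Lemma \ref{lm:PROB-GluedTransPlanConverge}, is the nontrivial ingredient, and it is already available. The only point requiring a modicum of care is confirming that the infimum is finite so that the argument is not vacuous; this is where the hypothesis that $\Jcal$ is bounded on bounded sets enters, combined with the boundedness of $\Fcal$. Everything else is a routine application of the direct method.
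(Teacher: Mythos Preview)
Your proof is correct and follows exactly the paper's approach: Lemma~\ref{lm:PROB-FeasSetComp} plus lower semicontinuity of $\Jcal$ via the direct method. Note that the paper explicitly remarks (immediately after the theorem) that boundedness of $\Jcal$ on bounded sets is not needed for this existence result---lower semicontinuity of an $\R\cup\{+\infty\}$-valued function on a compact set already forces the infimum to be attained and hence finite---so your finiteness check, while harmless, is superfluous here.
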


\begin{remark}
	Note that the boundedness of $\Jcal$ on bounded sets does not play a role in the proof of the existence result in \cref{th:PROB-BilevelHitchSolu}. We need it \textcolor{cyan}{later}, however, to guarantee the boundedness of the sequence of regularized bilevel solutions in order to be able to extract a cluster point that solves \cref{pr:PROB-BilevelHitch}. For this reason, we have assumed the boundedness of the bilevel objective from the very beginning.
\end{remark}

Even though the subordinate problem in \cref{pr:PROB-BilevelHitch} is just an finite-dimensional \textcolor{cyan}{linear problem}, which can be easily solved by a variety of efficient solvers, we are facing the same \textcolor{cyan}{difficulties} as was the case in the infinite-dimensional setting of the bilevel Kantorovich problem in \cite{hillbrecht2022bilevel1}:
\begin{itemize}
	\item the solution to the Hitchcock problem, living on the Cartesian product of the marginal's domains, is a high-dimensional object and subject to the "curse of dimensionality"; therefore, in general, the bilevel Hitchcock problem turns out to be a high-dimensional optimization problem.
	
	\item there is no closed-form expression for the solution of the Hitchcock problem (unless the cost matrix is constant or at least one of the marginals is a scaled unit vector); this prevents us from eliminating the optimal transport plan from the set of optimization variables, again resulting in an high-dimensional optimization problem.
	
	\item intensifying the previous \textcolor{cyan}{difficulty}, the optimal transport plan does not even need to be unique; therefore, there may not even be a single-valued solution map, ruling out the applicability of the popular implicit programming approach (\textcolor{cyan}{as seen \eg in} \cite{kovcvara1997topology} or in \cite{hintermueller2016bundle-free}).
\end{itemize}

The standard strategy to tackle these difficulties, \textcolor{cyan}{which arise} from the Hitchcock problem's \textcolor{cyan}{inherent structure}, is the so-called entropic regularization. \textcolor{cyan}{The latter} introduces a logarithmic regularization term to the Hitchcock problem's target function. This results not only in a unique optimal transport plan but also drastically reduces the dimension of the optimization problem by means of its dual formulation. Moreover, the entropic regularization allows for ``lightspeed'' computation of optimal transport plans via the famous Sinkhorn algorithm. For details on the entropic regularization and its application to optimal transport we only refer to the groundbreaking paper \cite{cuturi2013sinkhorn}.


\subsection{Tikhonov Regularization of the Hitchcock Problem}\label{sc:REG}
Instead of employing entropic regularization to overcome the aforementioned challenges, we use the following regularization approach for the Kantorovich problem, which was first introduced in \cite{lorenz2021quadratically}: for some regularization parameter $\gamma > 0$, we consider the \emph{regularized Hitchcock problem}
\begin{equation}
\label{pr:REG-RegHitch}
\tag{H$_\gamma$}
	\begin{array}{rl}
		\inf\limits_{\pi} & \scalarproduct[F]{c}{\pi} + \frac\gamma2 \norm[F]{\pi}^2 \\[0.5em]
		\text{s.t.} & \pi \in \R^{n_1 \times n_2}, \quad \pi \geq 0, \\[0.25em]
		& \pi \one = \mu_1, \quad \pi^\top \one = \mu_2,
	\end{array}
\end{equation}
which is almost identical to \cref{pr:PROB-Hitch}, with the only difference being that its target function additionally accommodates a quadratic regularization term including the Frobenius norm, therefore resulting in an strictly convex continuous objective. The regularization does not affect the compactness of its feasible set. \textcolor{cyan}{Therefore}, just like in the previous section, we \textcolor{cyan}{immediately receive} the following result.

\begin{theorem}
\label{th:REG-RegHitchSolu}
	For any pair of compatible marginals $\mu_1 \in \R^{n_1}$ and $\mu_2 \in \R^{n_2}$, with $\mu_1, \mu_2 \geq 0$ \textcolor{cyan}{as well as} $\mu_1^\top \one = \mu_2^\top \one$, \textcolor{cyan}{and} for any cost matrix $c \in \R^{n_1 \times n_2}$, the regularized Hitchcock problem \cref{pr:REG-RegHitch} has a unique optimal solution $\pi_\gamma \in \R^{n_1 \times n_2}$.
\end{theorem}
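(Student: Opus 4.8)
The plan is to observe that the feasible set of \eqref{pr:REG-RegHitch} is \emph{identical} to that of the unregularized Hitchcock problem \eqref{pr:PROB-Hitch}, since the Tikhonov term $\frac\gamma2\norm[F]{\pi}^2$ alters only the objective and leaves the constraints $\pi \geq 0$, $\pi \one = \mu_1$, $\pi^\top \one = \mu_2$ untouched. This common feasible set is exactly $\Coupling{\mu_1}{\mu_2}$. First I would note that it is non-empty: this is guaranteed by Theorem \ref{th:PROB-HitchSolu}, which produces an optimal (in particular feasible) point of \eqref{pr:PROB-Hitch} under precisely the present hypotheses on $\mu_1$ and $\mu_2$. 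It is also compact, being cut out by finitely many linear equalities together with the nonnegativity constraint; concretely, $\pi \geq 0$ and $\pi \one = \mu_1$ force the entrywise bounds $0 \leq \pi_{ij} \leq \norm[1]{\mu_1}$, so the set is closed and bounded. Since the regularized objective $f(\pi) \coloneqq \scalarproduct[F]{c}{\pi} + \frac\gamma2 \norm[F]{\pi}^2$ is continuous on $\R^{n_1 \times n_2}$, the Weierstrass theorem yields at least one minimizer on this non-empty compact set, establishing existence exactly as in the preceding section.

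The new ingredient relative to \eqref{pr:PROB-Hitch} is uniqueness, which I would derive from the \emph{strict} convexity of $f$. The linear term $\pi \mapsto \scalarproduct[F]{c}{\pi}$ is affine, while $\pi \mapsto \frac\gamma2 \norm[F]{\pi}^2$ is strictly convex for every $\gamma > 0$, its Hessian being $\gamma$ times the identity and hence positive definite; the sum is therefore strictly convex. Because $\Coupling{\mu_1}{\mu_2}$ is convex, I would argue by contradiction: if $\pi_1 \neq \pi_2$ were two distinct minimizers sharing the optimal value $m$, their midpoint $\tfrac12(\pi_1 + \pi_2)$ is again feasible and, by strict convexity, satisfies $f\bigl(\tfrac12(\pi_1 + \pi_2)\bigr) < \tfrac12 f(\pi_1) + \tfrac12 f(\pi_2) = m$, contradicting the optimality of $\pi_1$ and $\pi_2$. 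Hence the minimizer $\pi_\gamma$ is unique.

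There is essentially no hard obstacle here; this is the textbook existence-and-uniqueness statement for strictly convex minimization over a non-empty compact convex set. The only two points requiring a moment's care are, first, confirming that the feasible set is genuinely unaffected by the regularization, so that its non-emptiness and compactness may be imported verbatim from Section~\ref{sc:PROB} (indeed the paper has already noted that \eqref{pr:PROB-Hitch} minimizes a continuous function over a compact feasible set), and second, verifying the strict convexity of the objective, which is precisely where the added Frobenius term does its work. Everything else follows from Weierstrass together with the midpoint convexity argument.
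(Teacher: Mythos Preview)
Your proposal is correct and follows essentially the same approach as the paper: the paper dispenses with a formal proof, remarking only that the regularized objective is strictly convex and continuous while the (compact) feasible set is unchanged from \eqref{pr:PROB-Hitch}, so existence and uniqueness follow immediately. Your argument simply makes these two observations explicit.
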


In the following lemma, we characterize \cref{pr:REG-RegHitch}'s unique solution by means of \cref{pr:REG-RegHitch}'s necessary and sufficient first-order conditions.

\begin{theorem}
\label{th:REG-RegHitchDualSys}
	$\pi_\gamma \in \R^{n_1 \times n_2}$ is the unique optimal solution to \cref{pr:REG-RegHitch} (\wrt $\mu_1 \in \R^{n_1}$, $\mu_2 \in \R^{n_2}$, and $c \in \R^{n_1 \times n_2}$) if and only if there exist $\alpha_1 \in \R^{n_1}$ and $\alpha_2 \in \R^{n_2}$ such that
	\begin{equation}
	\label{eq:REG-DualSystem}
		\pi_\gamma = \frac1\gamma (\alphasum)_+, \quad
		\pi_\gamma \one = \mu_1, \quad \pi_\gamma^\top \one = \mu_2.
	\end{equation}
	where $(v_1 \oplus v_2)_{i_1, i_2} = v_1^{i_1} + v_2^{i_2}$, for all $(i_1, i_2) \in \Omega$, denotes the outer sum of the vectors $v_1 \in \R^{n_1}$ and $v_2 \in \R^{n_2}$
\end{theorem}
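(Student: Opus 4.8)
The plan is to read \eqref{pr:REG-RegHitch} as the minimization of a strictly convex, continuously differentiable objective over the polyhedron cut out by the affine marginal constraints $\pi\one = \mu_1$, $\pi^\top\one = \mu_2$ and the sign constraint $\pi \geq 0$, and then to invoke the Karush--Kuhn--Tucker (KKT) theory. Since all constraints are affine, the linearity (Abadie) constraint qualification holds automatically at every feasible point; combined with convexity of the objective, this makes the KKT conditions \emph{both necessary and sufficient} for global optimality. The whole theorem then amounts to writing down the KKT system and algebraically collapsing it into \eqref{eq:REG-DualSystem}.

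Concretely, I would introduce the Lagrangian
\begin{equation*}
	L(\pi, \alpha_1, \alpha_2, \lambda)
	= \scalarproduct[F]{c}{\pi} + \tfrac\gamma2 \norm[F]{\pi}^2
	- \scalarproduct{\alpha_1}{\pi\one - \mu_1}
	- \scalarproduct{\alpha_2}{\pi^\top\one - \mu_2}
	- \scalarproduct[F]{\lambda}{\pi},
\end{equation*}
with marginal multipliers $\alpha_1 \in \R^{n_1}$, $\alpha_2 \in \R^{n_2}$ and a nonnegative multiplier $\lambda \in \R^{n_1 \times n_2}$ for $\pi \geq 0$. Using that the $\pi$-derivatives of $\scalarproduct{\alpha_1}{\pi\one}$ and $\scalarproduct{\alpha_2}{\pi^\top\one}$ are the rank-one matrices $\alpha_1\one^\top$ and $\one\alpha_2^\top$, whose sum is precisely the outer sum $\alpha_1 \oplus \alpha_2$, stationarity in $\pi$ reads $\gamma\pi = (\alphasum) + \lambda$. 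I would then run the entrywise case analysis enforced by complementary slackness: on indices with $\pi_{i_1,i_2} > 0$ one has $\lambda_{i_1,i_2} = 0$, hence $\gamma\pi_{i_1,i_2} = (\alphasum)_{i_1,i_2} > 0$; on indices with $\pi_{i_1,i_2} = 0$ one has $(\alphasum)_{i_1,i_2} = -\lambda_{i_1,i_2} \leq 0$. These two cases together say exactly that $\gamma\pi_{i_1,i_2} = \max\{0,(\alphasum)_{i_1,i_2}\}$, i.e. $\pi = \tfrac1\gamma(\alphasum)_+$, which is the first relation in \eqref{eq:REG-DualSystem}; the remaining two relations are just primal feasibility.

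For the converse I would go backwards through the same elimination: given $(\pi_\gamma,\alpha_1,\alpha_2)$ solving \eqref{eq:REG-DualSystem}, set $\lambda \coloneqq \gamma\pi_\gamma - (\alphasum) = (\alphasum)_+ - (\alphasum)$, which is the nonnegative part of the negative of $\alphasum$ and therefore satisfies $\lambda \geq 0$ and $\lambda_{i_1,i_2}\pi_\gamma^{i_1,i_2} = 0$ entrywise; then $(\pi_\gamma,\alpha_1,\alpha_2,\lambda)$ fulfills the full KKT system, and sufficiency certifies optimality, while uniqueness of $\pi_\gamma$ is already supplied by Theorem \ref{th:REG-RegHitchSolu}. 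I do not expect a serious obstacle here: the argument is a routine convex-duality computation, and the only genuinely substantive step is the elimination of $\lambda$ into the positive part, which I would treat as the crux. The one point worth flagging is that, unlike $\pi_\gamma$, the pair $(\alpha_1,\alpha_2)$ is \emph{not} unique, since the shift $(\alpha_1 + t\one,\,\alpha_2 - t\one)$ leaves $\alpha_1\oplus\alpha_2$ invariant for every $t\in\R$; this non-uniqueness is harmless for the stated equivalence but should be noted since it motivates the further dual regularization in Section \ref{sc:DERIV}.
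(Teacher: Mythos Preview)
Your proposal is correct and follows essentially the same route as the paper: invoke KKT (necessary and sufficient by convexity plus affine constraints), identify the adjoint of the marginal maps with the outer sum $\alpha_1\oplus\alpha_2$, and eliminate the multiplier $\lambda$ for the sign constraint via complementary slackness to obtain the $(\substitute)_+$-formula. The paper's write-up is terser---it simply states the two equivalent forms of the first-order system without spelling out the entrywise case analysis or the converse reconstruction of $\lambda$---but your more explicit treatment and the closing remark on the non-uniqueness of $(\alpha_1,\alpha_2)$ are entirely in line with what the paper later exploits in Section~\ref{sc:DERIV}.
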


\begin{proof}
	Because \cref{pr:REG-RegHitch}'s target function is convex and differentiable and \textcolor{cyan}{because} the constraints are (affine) linear, $\pi_\gamma$ is a solution to \cref{pr:REG-RegHitch} if and only if there exist $\alpha_i \in \R^{n_i}$, $i = 1,2$, and $\lambda \geq 0$ such that
	\begin{align*}
			\pi_\gamma \one = \mu_1, \quad \pi_\gamma^\top \one = \mu_2, \quad \pi_\gamma &\geq 0, \\
			c + \gamma \pi_\gamma - \Sigma_1^* \alpha_1 - \Sigma_2^* \alpha_2 - \lambda &= 0, \\
			\scalarproduct[F]{\lambda}{\pi_\gamma} &= 0,
	\end{align*}
	where $\Sigma_1 \colon \R^{n_1 \times n_2} \to \R^{n_1}$ and $\Sigma_2 \colon \R^{n_1 \times n_2} \to \R^{n_2}$ refer to the mappings $\theta \mapsto \theta \one$ and $\theta \mapsto \theta^\top \one$, respectively. Their adjoint mappings \textcolor{cyan}{$\Sigma_1^*$ and $\Sigma_2^*$} relate to the outer sum operator via the equality
	\begin{equation*}
		\Sigma_1^* \alpha_1 + \Sigma_2^* \alpha_2
		= \alpha_1 \oplus \alpha_2.
	\end{equation*}
	The above first-order system is therefore equivalent to
	\begin{align*}
			\pi_\gamma \one = \mu_1, \quad \pi_\gamma^\top \one = \mu_2, \quad \pi_\gamma &\geq 0, \\
			c + \gamma \pi_\gamma - \alpha_1 \oplus \alpha_2 &\geq 0, \\
			\scalarproduct[F]{c + \gamma \pi_\gamma - \alpha_1 \oplus \alpha_2}{\pi_\gamma} &= 0,
	\end{align*}
	which is in turn equivalent to \cref{eq:REG-DualSystem}.
\end{proof}

\cref{th:REG-RegHitchDualSys} shows why, in our opinion, a quadratic regularization of the Hitchcock problem is superior to \textcolor{cyan}{an} entropic regularization. \textcolor{cyan}{It offers} the same advantageous properties as \textcolor{cyan}{an} entropic regularization, \ie
\begin{itemize}
	\item uniqueness of the solution;
	
	\item representation of the solution by means of dual variables, implying a reduction of dimensions;
	
	\item efficient computation of the solution by means of standard methods; \textcolor{cyan}{for the former, we refer to} \cite[Section 3]{lorenz2021quadratically}; \textcolor{cyan}{for the latter, we refer to \cite{cuturi2013sinkhorn}}.
\end{itemize}
\textcolor{cyan}{Besides that,} thanks to \textcolor{cyan}{presence of the} $(\substitute)_+$-operator in \cref{eq:REG-DualSystem}, \textcolor{cyan}{it also promotes} the sparsity of the optimal transport plan, which is a unique feature of the solution of the non-regularized Hitchcock problem, \textcolor{cyan}{which is entirely ignored} by \textcolor{cyan}{an} entropic regularization \textcolor{cyan}{of the Hitchcock problem}. In \cref{sc:DERIV} we will see that the presence of the $(\substitute)_+$-operator in the first-order optimality system of the Kantorovich problem will also prove useful when calculating derivatives.

\textcolor{cyan}{Analogously} to \cite{lorenz2021quadratically}, we receive the following equivalent formulation of the dual problem of the regularized Hitchcock problem.

\begin{lemma}
	The Lagrangian dual problem to \cref{pr:REG-RegHitch} is equivalent to the problem
	\begin{equation}
	\label{pr:REG-DualRegHitch}
	\tag{D$_\gamma$}
		\begin{array}{rl}
			\sup\limits_{\alpha_1, \alpha_2} & \scalarproduct{\alpha_1}{\mu_1} + \scalarproduct{\alpha_2}{\mu_2} - \frac{1}{2\gamma} \norm[F]{(\alphasum)_+}^2 \\
			\text{s.t.} & \alpha_i \in \R^{n_i}, ~ i = 1, 2.
		\end{array}
	\end{equation}
	For each $\gamma > 0$, there is an optimal solution to \cref{pr:REG-DualRegHitch} and there is no duality gap, \ie $\inf \cref{pr:REG-RegHitch} = \sup \cref{pr:REG-DualRegHitch}$.
\end{lemma}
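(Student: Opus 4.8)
The plan is to form the Lagrangian dual of \eqref{pr:REG-RegHitch} by dualizing only the two marginal equality constraints while keeping the sign constraint $\pi \geq 0$ explicit, and then to eliminate $\pi$ in closed form. Introducing multipliers $\alpha_1 \in \R^{n_1}$ and $\alpha_2 \in \R^{n_2}$ for $\pi \one = \mu_1$ and $\pi^\top \one = \mu_2$, the Lagrangian is
\begin{equation*}
	L(\pi, \alpha_1, \alpha_2)
	= \scalarproduct[F]{c}{\pi} + \frac\gamma2 \norm[F]{\pi}^2
	- \scalarproduct{\alpha_1}{\pi \one - \mu_1}
	- \scalarproduct{\alpha_2}{\pi^\top \one - \mu_2}.
\end{equation*}
Using the adjoint identity $\Sigma_1^* \alpha_1 + \Sigma_2^* \alpha_2 = \alpha_1 \oplus \alpha_2$ recalled in the proof of Theorem \ref{th:REG-RegHitchDualSys}, this rearranges to
\begin{equation*}
	L(\pi, \alpha_1, \alpha_2)
	= \scalarproduct[F]{c - \alpha_1 \oplus \alpha_2}{\pi} + \frac\gamma2 \norm[F]{\pi}^2
	+ \scalarproduct{\alpha_1}{\mu_1} + \scalarproduct{\alpha_2}{\mu_2},
\end{equation*}
so that the dual objective is $g(\alpha_1, \alpha_2) = \inf_{\pi \geq 0} L(\pi, \alpha_1, \alpha_2)$.

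Next I would carry out the inner minimization, which decouples entrywise: for each index $(i_1, i_2)$ one minimizes $t \mapsto (c - \alpha_1 \oplus \alpha_2)_{i_1,i_2}\,t + \tfrac\gamma2 t^2$ over $t \geq 0$. The minimizer is $\tfrac1\gamma \max\{0, (\alpha_1 \oplus \alpha_2 - c)_{i_1,i_2}\}$, so the full unconstrained-over-$\pi\geq 0$ minimizer is exactly $\hat\pi = \tfrac1\gamma (\alphasum)_+$, and the attained value in each entry is $-\tfrac{1}{2\gamma}\max\{0, (\alpha_1 \oplus \alpha_2 - c)_{i_1,i_2}\}^2$. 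Summing over all indices gives
\begin{equation*}
	g(\alpha_1, \alpha_2)
	= \scalarproduct{\alpha_1}{\mu_1} + \scalarproduct{\alpha_2}{\mu_2}
	- \frac{1}{2\gamma} \norm[F]{(\alphasum)_+}^2,
\end{equation*}
which is precisely the objective of \eqref{pr:REG-DualRegHitch}; this establishes the claimed equivalence of the Lagrangian dual with \eqref{pr:REG-DualRegHitch}.

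Finally, to obtain existence of a dual optimum and the absence of a duality gap simultaneously, I would read off the multipliers from the already-established optimality system rather than invoke an abstract constraint qualification. Let $\pi_\gamma$ be the unique primal solution and let $\alpha_1, \alpha_2$ be the dual variables furnished by \eqref{eq:REG-DualSystem}, so that $\pi_\gamma = \tfrac1\gamma (\alphasum)_+$ together with $\pi_\gamma \one = \mu_1$ and $\pi_\gamma^\top \one = \mu_2$. The first identity says exactly that $\pi_\gamma$ equals the inner minimizer $\hat\pi$ from above, whence $g(\alpha_1, \alpha_2) = L(\pi_\gamma, \alpha_1, \alpha_2)$; the two marginal identities make the constraint terms of $L$ vanish, so that $L(\pi_\gamma, \alpha_1, \alpha_2) = \scalarproduct[F]{c}{\pi_\gamma} + \tfrac\gamma2 \norm[F]{\pi_\gamma}^2 = \inf \eqref{pr:REG-RegHitch}$. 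Combined with weak duality $g \leq \inf \eqref{pr:REG-RegHitch}$, this shows that $(\alpha_1, \alpha_2)$ solves \eqref{pr:REG-DualRegHitch} and that $\sup \eqref{pr:REG-DualRegHitch} = \inf \eqref{pr:REG-RegHitch}$. The only genuinely non-routine step is the explicit entrywise minimization that collapses the Lagrangian into the $(\substitute)_+$-form; the remaining work is bookkeeping with the adjoint identity and the observation that the KKT multipliers of Theorem \ref{th:REG-RegHitchDualSys} directly certify strong duality.
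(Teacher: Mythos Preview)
Your proof is correct. The derivation of the dual objective by entrywise minimization over $\pi\geq 0$ is the standard computation, and your use of the multipliers from Theorem~\ref{th:REG-RegHitchDualSys} to certify both dual attainment and zero duality gap is clean and valid. The paper itself does not spell out a proof for this lemma; it simply states that the result is obtained ``completely analogous to \cite{lorenz2021quadratically}'', so your argument is in fact more self-contained than what the paper provides. One minor remark: instead of reading off the multipliers from the primal KKT system, one could equally well invoke strong duality for convex programs with affine constraints directly (Slater's condition is not needed when all constraints are linear), which would make the argument independent of Theorem~\ref{th:REG-RegHitchDualSys}; but your route is perfectly fine and arguably more concrete.
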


\cref{th:REG-RegHitchSolu} ensures the uniqueness of the solution to the regularized Hitchcock problem and \cref{th:REG-RegHitchDualSys} \textcolor{cyan}{provides} an algorithmic advantage, since the dual representation of the solution leads to a reduction of dimensions. This \textcolor{cyan}{yields a significant} advantage over the non-regularized Hitchcock problem. We consequently replace the lower-level Hitchcock problem from \cref{pr:PROB-BilevelHitch} by the Tikhonov regularized Hitchcock problem \cref{pr:REG-RegHitch}, \textcolor{cyan}{where} $\gamma > 0$, to receive the \emph{regularized bilevel Hitchcock problem}
\begin{equation}
\label{pr:REG-RegBilevelHitch}
\tag{BH$_\gamma$}
	\begin{array}{rl}
		\inf\limits_{\pi, \mu_1, c} & \Jcal(\pi, \mu_1) + \textcolor{cyan}{\frac1{2\gamma} \norm[F]{c - c_\drm}^2 }  \\[0.5em]
		\text{s.t.} & \pi, c \in \R^{n_1 \times n_2}, \quad \mu_1 \in \R^{n_1}, \\[0.25em]
		& \mu_1 \geq 0, \quad \mu_1^\top \one = {\mu_2^\drm}^\top \one, \\[0.5em]
		& \pi ~ \text{(uniquely) solves \cref{pr:REG-RegHitch} w.r.t.} ~ \mu_1, ~ \mu_2^\drm, ~ \text{and} ~ c,
	\end{array}
\end{equation}
\textcolor{cyan}{in which} $\Jcal$, $\mu_2^\drm$, and $c_\drm$ are \textcolor{cyan}{given as} in the formulation of the non-regularized bilevel problem \cref{pr:PROB-BilevelHitch}.

Note that in comparison to \cref{pr:PROB-BilevelHitch}, in the above bilevel problem, we have not only replaced the lower-level problem by its regularized counterpart but also introduced the cost function as an optimization variable as well. The motivation for this modification becomes evident in \textcolor{cyan}{the proof of \cref{co:Approx-ExistRecovSeq}}, where we present a construction of a recovery sequence in which the $\gamma$-dependent parts of the sequence are hidden in the cost function.

The existence of optimal solutions to the regularized bilevel Hitchcock problem is completely along the lines of the corresponding result of the non-regularized bilevel Hitchcock problem, see \cref{th:PROB-BilevelHitchSolu}. We therefore have the following result:

\begin{theorem}
\label{th:REG-RegBilevelHitchSolu}
	For any lower semicontinuous objective function $\Jcal$ and any given data $\mu_2^\drm$ and $c_\drm$, the regularized bilevel Hitchcock problem \cref{pr:REG-RegBilevelHitch} has at least one optimal solution.
\end{theorem}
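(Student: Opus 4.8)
The plan is to follow the direct method of the calculus of variations, closely mirroring the proofs of Lemma~\ref{lm:PROB-FeasSetComp} and Theorem~\ref{th:PROB-BilevelHitchSolu}; the only genuine novelty compared to the non-regularized setting is the treatment of the additional optimization variable $c$. First I would verify that the infimum in \eqref{pr:REG-RegBilevelHitch} is finite so that the problem is non-degenerate. Any $\mu_1 \in \R^{n_1}$ with $\mu_1 \geq 0$ and $\mu_1^\top \one = {\mu_2^\drm}^\top \one$, together with the choice $c = c_\drm$ and the (unique, by Theorem~\ref{th:REG-RegHitchSolu}) solution $\pi$ of \eqref{pr:REG-RegHitch}, constitutes a feasible triple, and by the assumed boundedness of $\Jcal$ on bounded sets its objective value is finite. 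Hence $\inf \eqref{pr:REG-RegBilevelHitch} < \infty$, and I may fix a minimizing sequence $(\pi_k, \mu_{1,k}, c_k)_{k\in\N}$.

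Next I would establish boundedness of this sequence. The constraints $\mu_{1,k} \geq 0$ and $\mu_{1,k}^\top \one = {\mu_2^\drm}^\top \one$ confine $\mu_{1,k}$ to a scaled simplex, while the coupling constraints $\pi_k \geq 0$, $\pi_k \one = \mu_{1,k}$, $\pi_k^\top \one = \mu_2^\drm$ confine $\pi_k$ to the corresponding uniformly bounded transport polytope; thus the $(\pi_k, \mu_{1,k})$-components remain in a fixed compact set $K$. Since $\Jcal$ is finite on $K$ (by the boundedness assumption) and lower semicontinuous, it attains its infimum over $K$ and is in particular bounded below there, say by $m \in \R$. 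Therefore
\begin{equation*}
	\tfrac1\gamma \norm[F]{c_k - c_\drm} \leq \Jcal(\pi_k, \mu_{1,k}) + \tfrac1\gamma \norm[F]{c_k - c_\drm} - m,
\end{equation*}
whose right-hand side is bounded because the sequence is minimizing. Hence $(c_k)_{k\in\N}$ is bounded as well, and after passing to a subsequence I may assume $(\pi_k, \mu_{1,k}, c_k) \to (\pi, \mu_1, c)$.

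The crux of the argument is to show that the limit is feasible, \ie that $\pi$ is again the \emph{unique} solution of \eqref{pr:REG-RegHitch} \wrt $\mu_1$, $\mu_2^\drm$, and $c$; this is where the varying cost $c_k$ must be controlled simultaneously with the varying marginal $\mu_{1,k}$, and I expect it to be the main obstacle. The constraints $\mu_1 \geq 0$, $\mu_1^\top \one = {\mu_2^\drm}^\top \one$, $\pi \geq 0$, $\pi\one = \mu_1$, $\pi^\top \one = \mu_2^\drm$ pass to the limit trivially, so it remains to verify optimality. Let $\pi^*$ denote the unique solution of \eqref{pr:REG-RegHitch} \wrt $\mu_1$, $\mu_2^\drm$, $c$. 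Invoking Lemma~\ref{lm:PROB-GluedTransPlanConverge}, I would produce a sequence $\pi_k^* \in \Coupling{\mu_{1,k}}{\mu_2^\drm}$ with $\pi_k^* \geq 0$ and $\pi_k^* \to \pi^*$. Exploiting the optimality of each $\pi_k$ for its own regularized problem and passing to the limit in
\begin{equation*}
	\scalarproduct[F]{c_k}{\pi_k} + \tfrac\gamma2 \norm[F]{\pi_k}^2 \leq \scalarproduct[F]{c_k}{\pi_k^*} + \tfrac\gamma2 \norm[F]{\pi_k^*}^2,
\end{equation*}
which is legitimate because $c_k \to c$, $\pi_k \to \pi$, and $\pi_k^* \to \pi^*$, yields $\scalarproduct[F]{c}{\pi} + \tfrac\gamma2 \norm[F]{\pi}^2 \leq \scalarproduct[F]{c}{\pi^*} + \tfrac\gamma2 \norm[F]{\pi^*}^2$. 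Since $\pi$ is feasible for the limiting problem and $\pi^*$ is its unique minimizer (Theorem~\ref{th:REG-RegHitchSolu}), this forces $\pi = \pi^*$, so the limit triple is feasible for \eqref{pr:REG-RegBilevelHitch}.

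Finally, since $\Jcal$ is lower semicontinuous and $c \mapsto \tfrac1\gamma \norm[F]{c - c_\drm}$ is continuous, the full objective is lower semicontinuous, whence
\begin{equation*}
	\Jcal(\pi, \mu_1) + \tfrac1\gamma \norm[F]{c - c_\drm} \leq \liminf_{k\to\infty} \Bigl( \Jcal(\pi_k, \mu_{1,k}) + \tfrac1\gamma \norm[F]{c_k - c_\drm} \Bigr) = \inf \eqref{pr:REG-RegBilevelHitch},
\end{equation*}
which identifies $(\pi, \mu_1, c)$ as a minimizer and proves the claim. The glued-plan construction of Lemma~\ref{lm:PROB-GluedTransPlanConverge}, combined with the strict convexity that grants uniqueness, is exactly what makes the closedness step go through despite the simultaneous perturbation of $c$ and $\mu_1$.
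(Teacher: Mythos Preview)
Your proof is correct and follows essentially the same approach as the paper, which merely states that the argument is ``completely along the lines'' of Theorem~\ref{th:PROB-BilevelHitchSolu} without giving details. You have correctly identified and handled the one new ingredient---the unbounded cost variable $c$---by using the coercivity of the penalty term $\tfrac1\gamma\norm[F]{c-c_\drm}$ together with the lower bound on $\Jcal$ to obtain boundedness of a minimizing sequence, and your closedness argument via Lemma~\ref{lm:PROB-GluedTransPlanConverge} is exactly the adaptation the paper has in mind.
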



\subsection{Approximation of Bilevel Solutions}\label{sc:APPROX}
In general, it is not clear how the solutions to \cref{pr:REG-RegBilevelHitch} relate to the solutions to \cref{pr:PROB-BilevelHitch}. In particular, \textcolor{cyan}{it is not clear} whether the solutions to the latter can be approximated by solutions to the former\textcolor{cyan}{,} if the regularization parameter is driven towards $0$. In the rest of this section, \textcolor{cyan}{we will show that a positive answer can be given.}

To this end, consider a sequence of regularization parameters $(\gamma_k)_{k \in \N} \subset \R_{>0}$, with $\gamma_k \to 0$ as $k \to \infty$. \textcolor{cyan}{Moreover}, consider the sequence of solutions $(\pi_k, \mu_{1,k}, c_k)_{k \in \N}$ to the sequence of regularized bilevel Hitchcock problems \hyperref[pr:REG-RegBilevelHitch]{(BH$_{\gamma_k}$)$_{k \in \N}$}.

\begin{lemma}
\label{lm:APPROX-ClustPointFeas}
	The sequence of regularized solutions $(\pi_k, \mu_{1,k}, c_k)_{k \in \N}$
	has a cluster point $(\bar{\pi}, \bar{\mu}_1, c_\drm)$ and the point $(\bar{\pi}, \bar{\mu}_1)$ is feasible for \cref{pr:PROB-BilevelHitch}, \ie $\bar{\mu}_1 \geq 0$ as well as $\bar{\mu}_1^\top \one = {\mu_2^\drm}^\top \one$ and $\bar{\pi}$ solves \cref{pr:PROB-Hitch} \wrt $\bar{\mu}_1$, $\mu_2^\drm$, and $c_\drm$.
\end{lemma}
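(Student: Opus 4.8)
The plan is to first extract a convergent subsequence and then verify that its limit satisfies every constraint of \eqref{pr:PROB-BilevelHitch}. The pair $(\pi_k, \mu_{1,k})$ is automatically confined to a fixed compact set: the conditions $\mu_{1,k} \geq 0$, $\mu_{1,k}^\top \one = {\mu_2^\drm}^\top \one$ together with $\pi_k \one = \mu_{1,k}$, $\pi_k^\top \one = \mu_2^\drm$, $\pi_k \geq 0$ confine both to one bounded polytope $K$ that does not depend on $k$. The genuine work is to control the cost iterate $c_k$ and to show $c_k \to c_\drm$.

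To handle $c_k$, I would compare the optimal value of the regularized bilevel problem \eqref{pr:REG-RegBilevelHitch} at parameter $\gamma_k$ against a cheap competitor. Fix any $(\pi^*, \mu_1^*)$ feasible for \eqref{pr:PROB-BilevelHitch} (which exists by Theorem \ref{th:PROB-BilevelHitchSolu}) and let $\pi_{\gamma_k}$ be the unique solution of \eqref{pr:REG-RegHitch} \wrt $\mu_1^*$, $\mu_2^\drm$, and $c_\drm$ guaranteed by Theorem \ref{th:REG-RegHitchSolu}. Then $(\pi_{\gamma_k}, \mu_1^*, c_\drm)$ is feasible for \eqref{pr:REG-RegBilevelHitch} with objective value $\Jcal(\pi_{\gamma_k}, \mu_1^*)$, the cost penalty vanishing. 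Optimality of $(\pi_k, \mu_{1,k}, c_k)$ then gives
\begin{equation*}
	\frac1{\gamma_k} \norm[F]{c_k - c_\drm} \leq \Jcal(\pi_{\gamma_k}, \mu_1^*) - \Jcal(\pi_k, \mu_{1,k}).
\end{equation*}
The right-hand side is bounded uniformly in $k$: since $\pi_{\gamma_k}$ is a coupling it remains bounded, so $\Jcal(\pi_{\gamma_k}, \mu_1^*)$ is bounded from above by the assumed boundedness of $\Jcal$ on bounded sets, while $\Jcal(\pi_k, \mu_{1,k})$ is bounded from below because a lower semicontinuous function is bounded below on the compact set $K$. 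Multiplying through by $\gamma_k \to 0$ forces $\norm[F]{c_k - c_\drm} \to 0$, \ie $c_k \to c_\drm$ along the whole sequence. In particular $(c_k)$ is bounded, so by compactness I can pass to a subsequence (not relabeled) along which $(\pi_k, \mu_{1,k}, c_k) \to (\bar{\pi}, \bar{\mu}_1, c_\drm)$.

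It remains to check that $(\bar{\pi}, \bar{\mu}_1)$ is feasible for \eqref{pr:PROB-BilevelHitch}. The sign condition $\bar{\mu}_1 \geq 0$, the mass constraint $\bar{\mu}_1^\top \one = {\mu_2^\drm}^\top \one$, and the coupling constraints $\bar{\pi} \one = \bar{\mu}_1$, $\bar{\pi}^\top \one = \mu_2^\drm$, $\bar{\pi} \geq 0$ all pass to the limit as they are closed conditions. The crux is optimality of $\bar{\pi}$ for \eqref{pr:PROB-Hitch} \wrt $\bar{\mu}_1$, $\mu_2^\drm$, $c_\drm$. I would test it against an arbitrary competitor $\pi^* \in \Coupling{\bar{\mu}_1}{\mu_2^\drm}$ with $\pi^* \geq 0$: by Lemma \ref{lm:PROB-GluedTransPlanConverge}, exploiting $\mu_{1,k} \to \bar{\mu}_1$, there is a sequence of nonnegative couplings $\pi_k^* \in \Coupling{\mu_{1,k}}{\mu_2^\drm}$ with $\pi_k^* \to \pi^*$. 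Since $\pi_k$ solves \eqref{pr:REG-RegHitch} \wrt $\mu_{1,k}$, $\mu_2^\drm$, $c_k$ while $\pi_k^*$ is feasible there, the minimality inequality
\begin{equation*}
	\scalarproduct[F]{c_k}{\pi_k} + \frac{\gamma_k}2 \norm[F]{\pi_k}^2 \leq \scalarproduct[F]{c_k}{\pi_k^*} + \frac{\gamma_k}2 \norm[F]{\pi_k^*}^2
\end{equation*}
holds. Passing to the limit, using $c_k \to c_\drm$, $\pi_k \to \bar{\pi}$, $\pi_k^* \to \pi^*$, and that the regularization terms vanish because $\gamma_k \to 0$ with bounded norms, yields $\scalarproduct[F]{c_\drm}{\bar{\pi}} \leq \scalarproduct[F]{c_\drm}{\pi^*}$. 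As $\pi^*$ was arbitrary, $\bar{\pi}$ is optimal, which completes the argument.

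I expect the main obstacle to be the two-sided control of $\Jcal$ that forces $c_k \to c_\drm$: the upper bound needs the assumed boundedness of $\Jcal$ on bounded sets applied to the comparison solutions $\pi_{\gamma_k}$, whereas the lower bound must be squeezed out of mere lower semicontinuity on the compact constraint set $K$. The second delicate point is transferring the competitor $\pi^*$ across the \emph{varying} marginals $\mu_{1,k}$, which is exactly what Lemma \ref{lm:PROB-GluedTransPlanConverge} supplies.
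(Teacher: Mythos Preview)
Your proof is correct and follows essentially the same route as the paper: bound $(\pi_k,\mu_{1,k})$ via the coupling constraints, use a feasible competitor with $c=c_\drm$ in \eqref{pr:REG-RegBilevelHitch} to force $\norm[F]{c_k-c_\drm}\to 0$, then apply Lemma~\ref{lm:PROB-GluedTransPlanConverge} to transport an arbitrary competitor across the varying marginals and pass to the limit in the regularized optimality inequality. The only cosmetic differences are that the paper extracts the cluster point $\bar{\mu}_1$ first and uses it as the fixed comparison marginal (instead of an auxiliary $(\pi^*,\mu_1^*)$ from \eqref{pr:PROB-BilevelHitch}), and that you are more explicit about the lower bound on $\Jcal(\pi_k,\mu_{1,k})$ via lower semicontinuity on the compact polytope~$K$, a point the paper leaves implicit.
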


\begin{proof}
	The constraints of \cref{pr:PROB-Hitch} imply the boundedness of $(\pi_k, \mu_{1,k})_{k \in \N}$ and therefore the existence of a cluster point $(\bar{\pi}, \bar{\mu}_1)$ such that (after possibly extracting a subsequence) $(\pi_k, \mu_{1,k}) \to (\bar{\pi}, \bar{\mu}_1)$ as $k \to \infty$. For any $k \in \N$, by \cref{th:REG-RegHitchSolu}, there exists an optimal solution to \hyperref[pr:REG-RegHitch]{(H$_{\gamma_k}$)} \wrt $\bar{\mu}_1$, $\mu_2^\drm$, and $c_\drm$, which we denote by $\tilde{\pi}_k$. Combining this with the optimality of $(\pi_k, \mu_{1,k}, c_k)$ for \hyperref[pr:REG-RegBilevelHitch]{(BH$_{\gamma_k}$)}, we find that
	\begin{equation*}
		\norm[F]{c_k - c_\drm}
		\leq \textcolor{cyan}{(2 \gamma_k)^{\frac12}} \bigl( \Jcal(\tilde{\pi}_k, \bar{\mu}_1) - \Jcal(\pi_k, \mu_{1,k}) \bigr).
	\end{equation*}
	Because \textcolor{cyan}{$\Jcal$ is bounded} on bounded sets and because \textcolor{cyan}{the regularization parameter} $\gamma_k$ vanishes, \textcolor{cyan}{we receive} the convergence $c_k \to c_\drm$ as $k \to \infty$. \textcolor{cyan}{This} establishes the first claim of the lemma.
	
	To prove the second claim, we first note that the conditions on $\bar{\mu}_1$ follow from the linearity of the constraints in \hyperref[pr:REG-RegBilevelHitch]{(BH$_{\gamma_k}$)}. Similarly, the linearity of the constraints of the regularized Hitchcock problems yields that $\bar{\pi}$ is feasible for \cref{pr:REG-RegHitch} \wrt $\bar{\mu}_1$ and $\mu_2^\drm$. To show that it is also optimal \wrt $c_\drm$, just like in \cref{lm:PROB-FeasSetComp}, we can find an optimal solution $\pi^*$ and a sequence $(\pi_k^*)_{k \in \N}$ such that $\pi_k^* \in \Coupling{\mu_{1,k}}{\mu_2^\drm}$ for all $k \in \N$ and $\pi_k^* \to \pi^*$ as $k \in \N$. Hence,
	\begin{equation*}
		\scalarproduct[F]{c_\drm}{\bar{\pi}}
		\leq \lim_{k \to \infty} \scalarproduct[F]{c_k}{\pi_k} + \frac{\gamma_k}2 \norm[F]{\pi_k}^2
		\leq \lim_{k \to \infty} \scalarproduct[F]{c_k}{\pi_k^*} + \frac{\gamma_k}2 \norm[F]{\pi_k^*}^2
		= \scalarproduct[F]{c_\drm}{\pi^*},
	\end{equation*}
	which proves the claim.
\end{proof}

We now arrive at an important result which guarantees the approximability of solutions to the non-regularized bilevel problems by solutions to the regularized bilevel problems, \textcolor{cyan}{provided} a so-called recovery sequence \textcolor{cyan}{is given}.

\begin{theorem}
\label{th:APPROX-ClustPointOptimal}
	Let $(\pi^*, \mu_1^*)$ be an optimal solution to the non-regularized bilevel Hitchcock problem \cref{pr:PROB-BilevelHitch} that is accompanied by a recovery sequence, \textcolor{cyan}{which is a} sequence $(\pi_k^*, \mu_{1,k}^*, c_k^*)_{k \in \N}$ \textcolor{cyan}{satisfying}
	\begin{itemize}
		\item[i.] \textcolor{cyan}{for all $k \in \N$, the point} $(\pi_k^*, \mu_{1,k}^*, c_k^*)$ is feasible for \textnormal{\hyperref[pr:REG-RegBilevelHitch]{(BH$_{\gamma_k}$)}},
		
		\item[ii.] $\limsup_{k \to \infty} \Jcal(\pi_k^*, \mu_{1,k}^*) + \textcolor{cyan}{\frac1{2 \gamma_k} \norm[F]{c_k^* - c_\drm}^2} \leq \Jcal(\pi^*, \mu_1^*)$.
	\end{itemize}
	Then, the point $(\bar{\pi}, \bar{\mu}_1)$ \textcolor{cyan}{from \cref{lm:APPROX-ClustPointFeas}} is \textcolor{cyan}{an optimal solution to} the non-regularized bilevel Hitchcock problem \cref{pr:PROB-BilevelHitch}.
\end{theorem}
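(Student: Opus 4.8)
The plan is to execute the lower-bound half of a standard $\Gamma$-convergence argument, combining the lower semicontinuity of $\Jcal$ with the optimality of the regularized solutions and the recovery-sequence estimate (ii). Lemma \ref{lm:APPROX-ClustPointFeas} already furnishes a subsequence (which I relabel by $k$) along which $(\pi_k, \mu_{1,k}) \to (\bar{\pi}, \bar{\mu}_1)$, together with the feasibility of $(\bar{\pi}, \bar{\mu}_1)$ for \eqref{pr:PROB-BilevelHitch}. Since feasibility is thus secured, the entire task reduces to establishing the single inequality $\Jcal(\bar{\pi}, \bar{\mu}_1) \leq \Jcal(\pi^*, \mu_1^*)$; optimality of the cluster point then follows immediately because $(\pi^*, \mu_1^*)$ is assumed optimal for the non-regularized problem.

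First I would play the optimality of $(\pi_k, \mu_{1,k}, c_k)$ for (\hyperref[pr:REG-RegBilevelHitch]{BH$_{\gamma_k}$}) against the competitor $(\pi_k^*, \mu_{1,k}^*, c_k^*)$ supplied by the recovery sequence, which is an admissible point by (i). This yields, for every $k$,
\[
	\Jcal(\pi_k, \mu_{1,k}) + \frac{1}{\gamma_k} \norm[F]{c_k - c_\drm}
	\leq \Jcal(\pi_k^*, \mu_{1,k}^*) + \frac{1}{\gamma_k} \norm[F]{c_k^* - c_\drm}.
\]
The crucial observation is that the penalty term on the left is nonnegative and may simply be discarded, so that $\Jcal(\pi_k, \mu_{1,k})$ is itself bounded above by the right-hand side.

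Next I would pass to the limit from both ends. On the one hand, the lower semicontinuity of $\Jcal$ and the convergence $(\pi_k, \mu_{1,k}) \to (\bar{\pi}, \bar{\mu}_1)$ give $\Jcal(\bar{\pi}, \bar{\mu}_1) \leq \liminf_{k \to \infty} \Jcal(\pi_k, \mu_{1,k})$. On the other hand, taking $\limsup$ on the right-hand side of the displayed estimate and invoking condition (ii) bounds it by $\Jcal(\pi^*, \mu_1^*)$. Chaining these two estimates through the displayed inequality produces $\Jcal(\bar{\pi}, \bar{\mu}_1) \leq \Jcal(\pi^*, \mu_1^*)$, and since $(\bar{\pi}, \bar{\mu}_1)$ is feasible while $(\pi^*, \mu_1^*)$ is optimal, the reverse inequality holds trivially, so $(\bar{\pi}, \bar{\mu}_1)$ is optimal for \eqref{pr:PROB-BilevelHitch} as claimed.

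The argument is essentially routine; the only point demanding genuine care is the subsequence bookkeeping, since the convergence to the cluster point from Lemma \ref{lm:APPROX-ClustPointFeas} is available only along a subsequence, whereas the per-index optimality inequality and hypothesis (ii) are stated for the full sequence. Because the optimality inequality is valid for every index (and hence along any subsequence), and because a $\limsup$ taken over a subsequence never exceeds the corresponding $\limsup$ over the full sequence, the chain of inequalities closes on the relabeled subsequence without difficulty. I therefore expect no real obstacle beyond keeping track of which limits run over the subsequence.
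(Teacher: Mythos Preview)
Your proposal is correct and follows essentially the same argument as the paper's proof: pass to the convergent subsequence from Lemma~\ref{lm:APPROX-ClustPointFeas}, use lower semicontinuity of $\Jcal$ for the $\liminf$ side, compare the regularized optimizers $(\pi_k,\mu_{1,k},c_k)$ against the recovery-sequence competitors via optimality in (\hyperref[pr:REG-RegBilevelHitch]{BH$_{\gamma_k}$}), and close with condition~(ii). The only cosmetic difference is that the paper keeps the nonnegative penalty term $\frac{1}{\gamma_k}\norm[F]{c_k - c_\drm}$ inside the $\liminf$ rather than discarding it first, which of course amounts to the same thing.
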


\begin{proof}
	Up to subsequences, $(\pi_k, \mu_{1,k}) \to (\bar{\pi}, \bar{\mu}_1)$ as $k \to \infty$. Because $\Jcal$ was assumed to be lower semicontinuous and because $(\pi_k, \mu_{1,k}, c_k)$ is optimal for \hyperref[pr:REG-RegBilevelHitch]{(BH$_{\gamma_k}$)},
	\begin{align*}
		\Jcal(\bar{\pi}, \bar{\mu}_1)
		&\leq \liminf_{k \to \infty} \, \Jcal(\pi_k, \mu_{1,k}) + \textcolor{cyan}{\frac1{2 \gamma_k} \norm[F]{c_k - c_\drm}^2} \\
		&\leq \limsup_{k \to \infty} \, \Jcal(\pi_k^*, \mu_{1,k}^*) + \textcolor{cyan}{\frac1{2 \gamma_k} \norm[F]{c_k^* - c_\drm}^2}
		\leq \Jcal(\pi^*, \mu_1^*),
	\end{align*}
	which proves the claim, since, by \cref{lm:APPROX-ClustPointFeas}, $(\bar{\pi}, \bar{\mu}_1)$ is feasible and $(\pi^*, \mu_1^*)$ was assumed to be optimal for \cref{pr:PROB-BilevelHitch}.
\end{proof}

\begin{remark}
	\begin{enumerate}
		\item \textcolor{cyan}{In variational convergence theory, particularly in the context of $\Gamma$-convergence of functionals,} the assumption of the existence of a recovery sequence is standard, \textcolor{cyan}{since} it ensures that the $\limsup$ inequality holds. In the absence of such a sequence, convergence of minimizers of regularized problems to minimizers of non-regularized problems \textcolor{cyan}{may} fail, as the following example illustrates:
			
		Consider the Hitchcock problem \cref{pr:PROB-Hitch} \textcolor{cyan}{\wrt} the marginals $\mu_1^\drm = (1, 1, 0)^\top$ \textcolor{cyan}{as well as} $\mu_2^\drm = (0, 1, 1)^\top$ \textcolor{cyan}{and \wrt to} the metric cost matrix $c_\drm \in \R^{3 \times 3}$ given by $c_{i_1, i_2} = |i_1 - i_2|$ for $i_1, i_2 = 1, \dots, 3$. It is straightforward to show that
		\begin{equation*}
			\pi_1^*
			= \textcolor{cyan}{\begin{pmatrix}
				0 & 1 & 0 \\
				0 & 0 & 1 \\
				0 & 0 & 0
			\end{pmatrix}},
			\quad
			\pi_2^*
			= \textcolor{cyan}{\begin{pmatrix}
				0 & 0 & 1 \\
				0 & 1 & 0 \\
				0 & 0 & 0
			\end{pmatrix}},
			\quad \text{and} \quad
			\pi_3^*
			= \begin{pmatrix}
				0 & \frac12 & \frac12 \\
				0 & \frac12 & \frac12 \\
				0 & 0 & 0
			\end{pmatrix}
		\end{equation*}
		each are optimal solutions to \cref{pr:PROB-Hitch}. Moreover, \textcolor{cyan}{considering} the lower semicontinuous and bounded objective function
		\begin{equation*}
			\Jcal(\pi, \mu_1)
			= \begin{cases}
				0, & \text{if} ~ \pi = \pi_1^*, \\
				1, & \text{else},
			\end{cases}
			\quad + \quad
			\begin{cases}
				0, & \text{if} ~ \mu_1 = \mu_1^\drm, \\
				1, & \text{else},
			\end{cases}
		\end{equation*}
		one finds that $(\pi_1^*, \mu_1^\drm)$ is the unique solution to \cref{pr:PROB-BilevelHitch} \wrt $\Jcal$, $\mu_2^\drm$, and $c_\drm$. One can then try to approximate the solution to \cref{pr:PROB-BilevelHitch} by solutions of the regularized bilevel problem
		\begin{equation}
		\label{pr:APPROX-BilevelHitchNoCostReg}
		\tag{$\widetilde{\text{BH}}_\gamma$}
		\begin{array}{rl}
			\inf\limits_{\pi, \mu_1} & \Jcal(\pi, \mu_1) \\[0.5em]
			\text{s.t.} & \pi \in \R^{n_1 \times n_2}, \quad \mu_1 \in \R^{n_1}, \\[0.25em]
			& \mu_1 \geq 0, \quad \mu_1^\top \one = {\mu_2^\drm}^\top \one, \\[0.6em]
			& \pi ~ \text{solves \cref{pr:REG-RegHitch} w.r.t.} ~ \mu_1, ~ \mu_2^\drm, ~ \text{and} ~ c_\drm.
		\end{array}
		\end{equation}
		Note that this problem coincides with \cref{pr:REG-RegBilevelHitch} if, in the latter, the cost is removed from the set of optimization variables and the penalty term is dropped from the objective function---modifications introduced solely to facilitate the construction of a recovery sequence in \textcolor{cyan}{\cref{co:Approx-ExistRecovSeq}}.
		
		However, for each $\gamma > 0$, the unique solution to \cref{pr:REG-RegHitch} \wrt $\mu_1^\drm$, $\mu_2^\drm$, and $c_\drm$ is given by $\pi_3^*$. Consequently, for any feasible point $(\pi, \mu_1)$ for \cref{pr:APPROX-BilevelHitchNoCostReg}, it holds that $\Jcal(\pi, \mu_1) \geq 1 > \Jcal(\pi_1^*, \mu_1^*)$ ruling out the existence of a recovery sequence in this setting. Moreover, for any $\gamma > 0$, the pair $(\pi_3^*, \mu_1^*)$ is an optimal solution to \cref{pr:APPROX-BilevelHitchNoCostReg}, confirming that the solutions of the regularized problem do not converge to a solution of the original problem \cref{pr:PROB-BilevelHitch}.
			
		\item The above counterexample relies on the fact that, \textcolor{cyan}{in certain situations}, the Hitchcock problem \cref{pr:PROB-Hitch} \textcolor{cyan}{admits} multiple optimal solutions. However, if the Hitchcock problem \textcolor{cyan}{only} admits a unique solution, a recovery sequence for some optimal solution $(\pi^*, \mu_1^*)$ to \cref{pr:PROB-BilevelHitch} is given by
		\begin{equation*}
			\mu_{1,k}^* = \mu_1^*,
			\quad  
			c_k^* = c_\drm,
			\quad \text{and} \quad
			\pi_k^* = \Scal_{\gamma_k}(\mu_{1,k}^*, \mu_2^\drm, c_k^*)
		\end{equation*}
		for all $k \in \N$. \textcolor{cyan}{In the above,} $\Scal_{\gamma_k} \colon \R^{n_1} \times \R^{n_2} \times \R^{n_1 \times n_2}$ \textcolor{cyan}{shall} denote the solution operator of the regularized Hitchcock problem \cref{pr:REG-RegHitch}.
		
		\item The arguments from \textcolor{cyan}{both} \cref{lm:APPROX-ClustPointFeas} and \cref{th:APPROX-ClustPointOptimal} also hold for every other cluster point of the sequence of regularized solutions (there may be several).
	\end{enumerate}
\end{remark}

\textcolor{cyan}{Fortunately, the following corollary shows that for any solution to the original bilevel Hitchcock problem, one can always construct a recovery sequence, thereby guaranteeing that solutions to the former can be approximated by solutions to the regularized bilevel Hitchcock problem.}

\begin{corollary}
\label{co:Approx-ExistRecovSeq}
	For any given optimal solution $(\pi^*, \mu_1^*)$ to the non-regularized bilevel Hitchcock problem \cref{pr:PROB-BilevelHitch}, there exists a recovery sequence in the sense of \cref{th:APPROX-ClustPointOptimal}.
	
	Hence, the cluster point $(\bar{\pi}, \bar{\mu}_1)$ found in \cref{lm:APPROX-ClustPointFeas} is an optimal solution to the non-regularized bilevel Hitchcock problem \cref{pr:PROB-BilevelHitch}.
\end{corollary}

\begin{proof}
	Because $\pi^* \geq 0$ solves \cref{pr:PROB-Hitch} \wrt $\mu_1^*$, $\mu_2^\drm$, as well as $c_{\drm}$ and because strong duality holds, we can find a dual solution $(\alpha_1^*, \alpha_2^*) \in \R^{n_1} \times \R^{n_2}$ which satisfies
	\begin{equation*}
		\alpha_1^* \oplus \alpha_2^* \leq c_{\drm}
		\qquad \text{as well as} \qquad
		\alpha_{1, i_1}^* + \alpha_{2, i_2}^*
		= (c_{\drm})_{i_1, i_2}
		~ \, \text{wherever} ~ \, \pi_{i_1, i_2}^* > 0,
	\end{equation*}
	see \eg \cite[Theorem 2]{flood1953hitchcock}. If we define the cost $c_k^* \coloneqq c_{\drm} - \gamma_k \pi^*$, $k \in \N$, we immediately receive that
	\begin{equation*}
		\pi_k^*
		\coloneqq \frac1{\gamma_k} (\alpha_1^* \oplus \alpha_2^* - c_k^*)_+
		= \frac1{\gamma_k} (\alpha_1^* \oplus \alpha_2^* - c_{\drm} + \gamma_k \pi^*)_+
		= \pi^*.
	\end{equation*}
	For any $k \in \N$, \cref{th:REG-RegHitchDualSys} therefore yields that $\pi_k^* = \pi^*$ solves the regularized Hitchcock problem \textnormal{\hyperref[pr:REG-RegHitch]{(H$_{\gamma_k}$)}} \wrt $\mu_1^*$, $\mu_2^\drm$, and $c_k^*$. Consequently, the sequence $(\pi_k^*, \mu_{1,k}^*, c_k^*)_{k \in \N}$, where $\mu_{1, k}^* \coloneqq \mu_1^*$, is feasible for the sequence of regularized bilevel problems \textnormal{\hyperref[pr:REG-RegBilevelHitch]{(BH$_{\gamma_k}$)$_{k \in \N}$}}. The second condition from the definition of a recovery sequence in \cref{th:APPROX-ClustPointOptimal} readily follows by construction of $c_k^*$ and from the fact that the above sequence is actually constant in $\pi_k^*$ and $\mu_{1,k}^*$.
\end{proof}

\section{A Further Regularization of the (Dual) Hitchcock Problem}\label{sc:DERIV}
In \cref{sc:PROB}, we have seen that we can approximate solutions to the bilevel Hitchcock problem \cref{pr:PROB-BilevelHitch} by solutions to the regularized bilevel Hitchcock problems \cref{pr:REG-RegBilevelHitch}, if we drive the regularization parameter $\gamma$ towards $0$. However, until now we did not comment on how to actually compute solutions to the latter.

In general, the problems \hyperref[pr:REG-RegBilevelHitch]{(BH$_\gamma$)$_{\gamma > 0}$} are non-convex optimization problems, which inherently present several challenges, such as the presence of non-global minima and/or saddle points. Additionally, although there exists a solution operator \textcolor{cyan}{which maps} the marginals to the (unique) solution of the regularized Kantorovich problem, we currently lack a first-order optimality system \textcolor{cyan}{and} derivatives of the solution operator that could be used to solve the regularized bilevel problems. While obtaining such first-order information might be feasible in certain related problems (see, e.g., \cite{carioni2023extremal}), we pursue a different approach that we consider more convenient.

The solution to the regularized Hitchcock problems \hyperref[pr:REG-RegHitch]{(H$_\gamma$)$_{\gamma > 0}$} is unique, but its corresponding Lagrangian multipliers are not: \cref{th:REG-RegHitchDualSys} reveals that we can (constantly) shift the multipliers in opposite directions and these shifted vectors will still be multipliers for the regularized transport plan; also, the kernel of the $(\substitute)_+$-operator provides another source of non-uniqueness for the multipliers, as any value of their outer sum that is smaller than the cost of transportation is simply cut off. While the uniqueness of the dual variables may not be necessary to compute derivatives of \cref{pr:REG-RegHitch}'s solution mapping, we shall see in the following that it turns out to be very useful if we enforce \textcolor{cyan}{this property} by introducing another regularization.

Given the marginals $\mu_1 \in \R^{n_1}$ and $\mu_2 \in \R^{n_2}$, the cost matrix $c \in \R^{n_1 \times n_2}$, as well as the regularization parameters $\gamma, \varepsilon > 0$, we seek to find solutions $\alpha_1 \in \R^{n_1}$ and $\alpha_2 \in \R^{n_2}$ to the regularized nonlinear system of equations
\begin{align}
\label{eq:DERIV-DualNonlinSyst}
	\begin{split}
		(\alphasum)_+^{\phantomtop} \one + \gamma \varepsilon \alpha_1
		&= \gamma \mu_1, \\
		(\alphasum)_+^\top \one + \gamma \varepsilon \alpha_2
		&= \gamma \mu_2.
	\end{split}
\end{align}
The above system (note the similarity between \cref{eq:REG-DualSystem} and \cref{eq:DERIV-DualNonlinSyst}) corresponds to the first-order necessary and sufficient optimality conditions of a standard Tikhonov regularization of the dual problem \cref{pr:REG-DualRegHitch}, namely,
\begin{equation}
\label{pr:REG-DualRegRegHitch}
\tag{D$_\gamma^\varepsilon$}
	\begin{array}{rl}
		\sup\limits_{\alpha_1, \alpha_2} & \scalarproduct{\alpha_1}{\mu_1} + \scalarproduct{\alpha_2}{\mu_2}- \frac{1}{2\gamma} \norm[F]{(\alphasum)_+}^2 - \frac\varepsilon2 \bigl( \norm{\alpha_1}^2 + \norm{\alpha_2}^2 \bigr) \\[0.75em]
		\text{s.t.} & \alpha_i \in \R^{n_i}, ~ i = 1, 2.
	\end{array}
\end{equation}
Because \cref{pr:REG-DualRegRegHitch}'s objective function, which we may denote by $\Psi(\alpha_1, \alpha_2)$, is strongly concave \textcolor{cyan}{as well as} differentiable and has the gradient
\begin{equation*}
	\nabla \Psi(\alpha_1, \alpha_2)
	=
	\begin{pmatrix}
		\mu_1 - \frac1\gamma (\alphasum)_+^{\phantomtop} \one - \varepsilon \alpha_1 \\[0.5em]
		\mu_2 - \frac1\gamma (\alphasum)_+^\top \one - \varepsilon \alpha_2
	\end{pmatrix}.
\end{equation*}
\textcolor{cyan}{$\Psi$}'s unique maximum $(\alpha_1^*, \alpha_2^*)$ solves the above nonlinear system. Note that \cref{eq:DERIV-DualNonlinSyst} always admits a unique solution, independently of the marginal's signs and their masses!

This guarantees the existence of the solution operator
\begin{equation*}
	\Fcal_{\gamma, \varepsilon} \colon \R^{n_1} \times \R^{n_2} \to \R^{n_1} \times \R^{n_2},
	\quad (\mu_1, \mu_2) \mapsto (\alpha_1, \alpha_2),
\end{equation*}
with $\alpha_1$ and $\alpha_2$ solving \cref{eq:DERIV-DualNonlinSyst} \wrt $\mu_1$ and $\mu_2$ (as well as $c$, $\gamma$, and $\varepsilon$). $\Fcal_{\gamma, \varepsilon}$ is a bijective mapping and its inverse is given by
\begin{equation*}
	\Fcal_{\gamma, \varepsilon}^{-1}(\alpha_1, \alpha_2)
	= \frac1\gamma \bigl( (\alphasum)_+ \one + \gamma \varepsilon \alpha_1, (\alphasum)_+^\top \one + \gamma \varepsilon \alpha_2 \bigr),
\end{equation*}
Moreover, $\Fcal_{\gamma, \varepsilon}$ is a Lipschitz continuous mapping, see \eg \cite[Proposition 2G.4]{dontchev2009implicit}.


\textcolor{cyan}{In the following,} we are interested in the properties of a mapping which maps the marginals onto the corresponding (optimal) transport plan. \textcolor{cyan}{We will use this mapping to} replace the lower-level Kantorovich problem in the formulation of the bilevel Hitchcock problem in \cref{sc:REDU}.

\begin{definition}
\label{df:DERIV-RegMTPMap}
	The \emph{regularized marginal-to-transport-plan mapping} is given by
	\begin{equation*}
		\Scal_{\gamma, \varepsilon} \colon \R^{n_1} \times \R^{n_2} \to \R^{n_1 \times n_2},
		\quad (\mu_1, \mu_2) \mapsto \frac1\gamma (\alphasum)_+,
	\end{equation*}
	where $(\alpha_1, \alpha_2) = \Fcal_{\gamma, \varepsilon}(\mu_1, \mu_2)$.
\end{definition}

The above definition gives rise to several remarks.

\begin{remark}
	\begin{itemize}
		\item The term ``marginal-to-transport-plan mapping'' alone may be misleading in the sense that $\pi_{\gamma, \varepsilon} \coloneqq \Scal_{\gamma, \varepsilon}(\alpha_1, \alpha_2)$ is in general not a regularized optimal transport plan (\ie not a solution to \cref{pr:REG-RegHitch}) \wrt $\mu_1$, $\mu_2$, and $c$. It is important to keep in mind that the mapping $\Scal_{\gamma, \varepsilon}$ arises from a Tikhonov regularization of the dual problem. Nevertheless, with the same arguments as in \cite[Section 2.3]{lorenz2021quadratically} one can show that, for $\varepsilon \searrow 0$, $\pi_{\gamma, \varepsilon}$ converges to $\pi_\gamma$, the unique solution to \cref{pr:REG-RegHitch} \wrt $\mu_1$, $\mu_2$, and $c$, which justifies the name we have coined for the mapping from \cref{df:DERIV-RegMTPMap}.
		
		\item To ease the computation of the (sub-)gradients and because the focus of this section does not lie on the approximability of non-regularized solutions but on the computation of regularized solutions, we remove the cost matrix from the set of optimization variables. However, we expect the results to be similar, if one decides to keep the set of optimization variables from the previous subsections.
		
		\item Moreover, we expect that the further regularization of the dual problem behaves well in view of the approximation results from \cref{sc:REG}, \ie that the results of \cref{th:REG-RegBilevelHitchSolu}, \cref{lm:APPROX-ClustPointFeas}, and \cref{th:APPROX-ClustPointOptimal} hold in the case of \cref{pr:REDU-TwiceRegBilevelHitch} from \cref{sc:REDU}.
		
		\item Because the mapping $\Fcal_{\gamma, \varepsilon}$ and both the $\oplus$-operator as well as the $(\substitute)_+$-operator are Lipschitz continuous, $\Scal_{\gamma, \varepsilon}$ too is a Lipschitz continuous mapping.
	\end{itemize}
\end{remark}


\subsection{(Sub-)Gradient Analysis}
\label{sc:SGA}

In the remainder of this section, we will: characterize the points at which $\Scal_{\gamma, \varepsilon}$ is differentiable; calculate its derivative at those points; and, for the points of non-differentiability, find a manageable representation of its Bouligand subdifferential. The following definition addresses the latter aspect.

\begin{definition}
\label{df:DERIV-BoulSubdiff}
	Let $f \colon \R^m \to \R^n$, $m, n \in \N$, be a locally Lipschitz function. By Rademacher's theorem, $f$ is differentiable on a set $\Dcal_f$, whose complement is a Lebesgue null set in $\R^m$. The set
	\begin{equation*}
		\partial_B f(x)
		\coloneqq \Bigl\{
		\lim_{k \to \infty} f'(x_k) \colon
		(x_k)_{k \in \N} \subset \Dcal_f, ~ x_k \to x ~ \text{as} ~ k \to \infty
		\Bigr\}
	\end{equation*}
	is called the \emph{Bouligand subdifferential} of $f$ at some point $x \in \R^m$. It relates to Clarke's generalized Jacobian $\partial f(x)$ via the definition
		\begin{equation*}
			\partial f(x) \coloneqq \operatorname{co} \bigl( \partial_B f(x) \bigr),
		\end{equation*}
		see \eg \cite[Section 2.6]{clarke1990optimization}. Note that this set is closed qua definition, see \eg \cite[Proposition 2.6.1]{clarke1990optimization}.
\end{definition}

%

In order to be able to compute the derivative of the regularized marginal-to-transport-plan mapping $\Scal_{\gamma, \varepsilon}$, we first need to characterize the directional derivative of the solution operator of the nonlinear system \cref{eq:DERIV-DualNonlinSyst}.

\begin{lemma}
	The solution operator $\Fcal_{\gamma, \varepsilon}$ is Hadamard differentiable at any point $\mu = (\mu_1, \mu_2) \in \R^{n_1} \times \R^{n_2}$ and in any direction $h = (h_1, h_2) \in \R^{n_1} \times \R^{n_2}$ and its directional derivative is given by the unique solution $\Fcal_{\gamma, \varepsilon}'(\mu; h) = (\eta_1, \eta_2) \in \R^{n_1} \times \R^{n_2}$ to the nonlinear system of equations
	\begin{align}
	\label{eq:DERIV-SolOpDirecDeriv}
		\begin{split}
			{\max}'(\alphasum; \etasum)^{\! \phantomtop} \! \one + \gamma \varepsilon \eta_1
			&= \gamma h_1, \\
			{\max}'(\alphasum; \etasum)^{\! \top} \! \one + \gamma \varepsilon \eta_2
			&= \gamma h_2.
		\end{split}
	\end{align}
	In the above, $(\alpha_1, \alpha_2) = \Fcal_{\gamma, \varepsilon}(\mu)$ and
	\begin{equation}
	\label{eq:DERIV-MaxOpDirecDeriv}
		{\max}'(a; b)
		= \begin{cases}
			b, & \text{if} ~ a > 0, \\
			\max \{0, b\}, & \text{if} ~ a = 0, \\
			0, & \text{if} ~ a < 0,
		\end{cases}
	\end{equation}
	denotes the directional derivative of the mapping $x \mapsto \max \{0, x\}$, which is understood to be applied entry-wise to the matrices $\alphasum$ and $\etasum$.
\end{lemma}

\begin{proof}
	For any $t > 0$, we consider the point $\alpha_t = \Fcal_{\gamma, \varepsilon}(\mu + th)$ and the difference quotient $\eta_t = {}^{1}\!/\!{}_{t} (\alpha_t - \alpha)$, which satisfies
	\begin{align*}
		\frac{\bigl( (\alpha_{1,t} \oplus \alpha_{2,t} - c)_+ - (\alphasum)_+ \bigr)^{\! \phantomtop} \! \one}{t} + \gamma \varepsilon \eta_{1,t}
		&= \gamma h_1, \\
		\frac{\bigl( (\alpha_{1,t} \oplus \alpha_{2,t} - c)_+ - (\alphasum)_+ \bigr)^{\! \top} \! \one}{t} + \gamma \varepsilon \eta_{2,t}
		&= \gamma h_2.
	\end{align*}
	Because $\Fcal_{\gamma, \varepsilon}$ is Lipschitz continuous, the sequence $(\eta_t)_{t \searrow 0}$ is bounded and converges (up to subsequences) to some point $\eta$. Because the mapping $x \mapsto \max \{0, x\}$ is Hadamard differentiable and $\alpha_t = \alpha + t \eta + o(t)$, we can pass to the limit in the above system of equations to arrive at 
	\begin{align}
	\label{eq:DERIV-DiffQuoLimSyst}
		\begin{split}
			{\max}'(\alphasum; \etasum)^{\! \phantomtop} \! \one + \gamma \varepsilon \eta_1
			&= \gamma h_1, \\
			{\max}'(\alphasum; \etasum)^{\! \top} \! \one + \gamma \varepsilon \eta_2
			&= \gamma h_2.
		\end{split}
	\end{align}
	
	It is straightforward to check that the operator
	\begin{equation*}
		(u_1, u_2) \mapsto \bigl( {\max}'(\alphasum; u_1 \oplus u_2) \one + \gamma \varepsilon u_1, {\max}'(\alphasum; u_1 \oplus u_2)^\top \one + \gamma \varepsilon u_2 \bigr)
	\end{equation*}
	is strongly monotone, coercive, and continuous in the sense of \cite{ruzicka2004nichtlineare}. Consequently, by the Browder-Minty theorem, see \eg \cite[Satz 1.5]{ruzicka2004nichtlineare}, the system from \cref{eq:DERIV-DiffQuoLimSyst} is uniquely solvable and the entire sequence $(\eta_t)_{t \searrow 0}$ converges to $\eta$, which proves the directional differentiability of $\Fcal_{\gamma, \varepsilon}$. That $\Fcal_{\gamma, \varepsilon}$ is moreover Hadamard differentiable follows from its Lipschitz continuity.
\end{proof}

\begin{proposition}
\label{pp:DERIV-RegMTPMapDirecDeriv}
	The regularized marginal-to-transport-plan mapping $\Scal_{\gamma, \varepsilon}$ is Ha\-da\-mard differentiable and its directional derivative at the point $\mu = (\mu_1, \mu_2) \in \R^{n_1} \times \R^{n_2}$ in the direction $h = (h_1, h_2) \in \R^{n_1} \times \R^{n_2}$ is given by
	\begin{equation*}
		\Scal_{\gamma, \varepsilon}'(\mu; h)
		= \frac1\gamma {\max}'(\alphasum; \etasum),
		\quad \text{where} ~ (\eta_1, \eta_2) = \Fcal_{\gamma, \varepsilon}'(\mu; h).
	\end{equation*}
\end{proposition}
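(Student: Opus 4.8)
The plan is to derive the formula as a direct application of the chain rule for Hadamard directionally differentiable maps. First I would write $\Scal_{\gamma,\varepsilon}$ as the composition $\Scal_{\gamma,\varepsilon} = \Phi \circ \Fcal_{\gamma,\varepsilon}$, where $\Phi(\alpha_1,\alpha_2) \coloneqq \frac1\gamma (\alphasum)_+$. The inner factor $\Fcal_{\gamma,\varepsilon}$ is already known to be Hadamard differentiable by the preceding lemma, with directional derivative $(\eta_1,\eta_2) = \Fcal_{\gamma,\varepsilon}'(\mu;h)$; hence the work reduces to establishing the Hadamard differentiability of $\Phi$ and computing its directional derivative.

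To handle $\Phi$, I would split it further as $\Phi = \Psi_2 \circ \Psi_1$, with $\Psi_1(\alpha_1,\alpha_2) = \alphasum$ and $\Psi_2(M) = \frac1\gamma (M)_+$ acting entrywise. Being affine linear, $\Psi_1$ is Hadamard differentiable with $\Psi_1'((\alpha_1,\alpha_2);(\eta_1,\eta_2)) = \etasum$, the constant shift $-c$ dropping out upon differentiation. For $\Psi_2$, I would recall that the scalar positive part $x \mapsto \max\{0,x\}$ is Lipschitz, convex, and piecewise linear, hence Hadamard directionally differentiable with directional derivative ${\max}'(a;b)$ as recorded in the preceding lemma; applied entrywise, this shows that $\Psi_2$ is Hadamard differentiable with $\Psi_2'(M;D) = \frac1\gamma {\max}'(M;D)$.

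Combining these pieces via the chain rule for Hadamard directionally differentiable functions then yields
\[
	\Scal_{\gamma,\varepsilon}'(\mu;h)
	= \Psi_2'\bigl( \Psi_1(\alpha_1,\alpha_2);\, \Psi_1'((\alpha_1,\alpha_2);(\eta_1,\eta_2)) \bigr)
	= \frac1\gamma {\max}'(\alphasum; \etasum),
\]
which is exactly the asserted expression. The only genuinely delicate point is the legitimacy of the chain rule: for merely directionally (Gâteaux) differentiable maps it can fail, and it is precisely the Hadamard differentiability — furnished for $\Fcal_{\gamma,\varepsilon}$ by the preceding lemma and for the positive-part map here — that guarantees the composition is again Hadamard differentiable with the directional derivatives composing as expected. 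I therefore expect that invoking (or briefly justifying) this chain-rule property, rather than any computation, will be the crux of the argument.
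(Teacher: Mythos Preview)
Your proposal is correct and mirrors the paper's proof essentially verbatim: the paper writes $\Scal_{\gamma,\varepsilon} = \Pcal_\gamma \circ \Fcal_{\gamma,\varepsilon}$ with $\Pcal_\gamma(u_1,u_2) = \frac1\gamma (u_1 \oplus u_2 - c)_+$, notes that $\Pcal_\gamma$ is Hadamard differentiable with the expected directional derivative, and then invokes the Hadamard chain rule (citing Shapiro) exactly as you suggest. Your additional decomposition $\Phi = \Psi_2 \circ \Psi_1$ simply spells out the ``one easily checks'' step the paper leaves implicit.
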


\begin{proof}
	One easily checks that the mapping $\Pcal_{\gamma} \colon \R^{n_1} \times \R^{n_2} \to \R^{n_1 \times n_2}$, $(u_1, u_2) \mapsto \frac1\gamma (u_1 \oplus u_2 - c)_+$ is Hadamard differentiable with directional derivative
	\begin{equation*}
		\Pcal_{\gamma}'(u; h)
		= \frac1\gamma {\max}'(u_1 \oplus u_2 - c; h_1 \oplus h_2)
		\quad \text{for all} ~ u, h \in \R^{n_1} \times \R^{n_2}.
	\end{equation*}
	The claim then follows from an application of the Hadamard chain rule, see \eg \cite[Proposition 3.6]{shapiro1990concepts}, to the mapping $\Scal_{\gamma, \varepsilon} = \Pcal_{\gamma} \circ \Fcal_{\gamma, \varepsilon}$.
\end{proof}

We define the following sets to characterize the points at which $\Scal_{\gamma, \varepsilon}$ is not only directional differentiable but (totally) differentiable:

\begin{definition}
\label{df:DERIV-Omega+Omega0Omega-}
	Given some point $\mu \in \R^{n_1} \times \R^{n_2}$ and $\alpha = \Fcal_{\gamma, \varepsilon}(\mu)$, we define
	\begin{align*}
		\Omega_+(\mu)
		&\coloneqq \{(i_1, i_2) \in \Omega \colon (\alphasum)_{i_1, i_2} > 0\}, \\
		\Omega_0^{\hphantom{-}}(\mu)
		&\coloneqq \{(i_1, i_2) \in \Omega \colon (\alphasum)_{i_1, i_2} = 0\}, \\
		\Omega_-(\mu)
		&\coloneqq \{(i_1, i_2) \in \Omega \colon (\alphasum)_{i_1, i_2} < 0\}.
	\end{align*}
	If there is no risk of confusion, we refrain from explicitly mentioning the dependence of the sets on the point $\mu$. Note that $\Omega = \Omega_+ ~ \dot{\cup} ~ \Omega_0 ~ \dot{\cup} ~ \Omega_-$.
\end{definition}

With the definitions from above, we can now precisely characterize the points at which $\Scal_{\gamma, \varepsilon}$ is (totally) differentiable.

\begin{proposition}
\label{pp:DERIV-RegMTPMapDiffPoints}
	The regularized marginal-to-transport-plan mapping $\Scal_{\gamma, \varepsilon}$ is differentiable at $\mu \in \R^{n_1} \times \R^{n_2}$ if and only if $\Omega_0(\mu) = \emptyset$.
\end{proposition}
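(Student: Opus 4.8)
The plan is to reduce the statement to a \emph{linearity} question via the directional-derivative formula of Proposition~\ref{pp:DERIV-RegMTPMapDirecDeriv}. Since $\Scal_{\gamma,\varepsilon}$ is Hadamard directionally differentiable at $\mu$, and since in finite dimensions a Hadamard directionally differentiable map is (totally) differentiable at a point precisely when its directional derivative is linear in the direction, the claim is equivalent to: the map $h \mapsto \Scal_{\gamma,\varepsilon}'(\mu;h) = \tfrac1\gamma {\max}'(\alphasum; \etasum)$, with $(\eta_1,\eta_2) = \Fcal_{\gamma,\varepsilon}'(\mu;h)$, is linear in $h$ if and only if $\Omega_0 = \emptyset$.

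For the forward (``if'') direction I would assume $\Omega_0 = \emptyset$. On the remaining partition $\Omega = \Omega_+ \mathbin{\dot\cup} \Omega_-$ the entrywise operator ${\max}'(\alphasum; \etasum)$ equals $\etasum$ on $\Omega_+$ and $0$ on $\Omega_-$, hence is \emph{linear} in $\eta$. Consequently the defining system \eqref{eq:DERIV-SolOpDirecDeriv} for $\eta = \Fcal_{\gamma,\varepsilon}'(\mu;h)$ becomes a linear system, and because its associated operator is strongly monotone (as established in the proof of the lemma preceding Proposition~\ref{pp:DERIV-RegMTPMapDirecDeriv}) and therefore invertible, $\eta$ depends linearly on $h$. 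Substituting back shows $\Scal_{\gamma,\varepsilon}'(\mu;\cdot)$ is linear, which yields differentiability.

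For the converse I would argue by contraposition: assuming $\Omega_0 \neq \emptyset$, fix an index $(r,s) \in \Omega_0$ and test the single direction $h = (e_r, e_s)$. The decisive structural observation is that on $\Omega_0$ the formula reads $\Scal_{\gamma,\varepsilon}'(\mu;h)_{r,s} = \tfrac1\gamma \max\{0, (\eta_1)_r + (\eta_2)_s\} \ge 0$ for \emph{every} direction; so if $\Scal_{\gamma,\varepsilon}'(\mu;\cdot)$ were linear, evaluating the sign inequality at both $h$ and $-h$ would force $\Scal_{\gamma,\varepsilon}'(\mu;h)_{r,s} = 0$. I would then contradict this by proving $(\eta_1)_r + (\eta_2)_s > 0$ for $\eta = \Fcal_{\gamma,\varepsilon}'(\mu;h)$. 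Denoting by $G$ the operator from the left-hand side of \eqref{eq:DERIV-SolOpDirecDeriv}, one checks $G(0) = 0$ (since ${\max}'(\,\cdot\,; 0) = 0$ in all three cases), and that $\eta \neq 0$ because $G$ is injective while $\gamma h \neq 0$. Testing the identity $G(\eta) = \gamma h$ against $\eta$ and using strict monotonicity gives $\gamma\bigl( (\eta_1)_r + (\eta_2)_s \bigr) = \gamma\langle h,\eta\rangle = \langle G(\eta) - G(0),\, \eta\rangle > 0$, where the first equality is the defining feature of the choice $h = (e_r,e_s)$. This delivers the required strict positivity and hence the contradiction.

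The step I expect to be the main obstacle is precisely this converse: one must make sure that the nonlinearity of ${\max}'$ on $\Omega_0$ genuinely survives the composition with $\Fcal_{\gamma,\varepsilon}'$ and is not accidentally cancelled, without having to analyse the (implicitly defined, piecewise-linear) inverse operator $\Fcal_{\gamma,\varepsilon}'$ in detail. The sign-plus-monotonicity trick above is what makes this clean: the nonnegativity of the $\Omega_0$-entries of the directional derivative is a free consequence of the $\max\{0,\cdot\}$ form, while the strong monotonicity of $G$ together with $G(0)=0$ and a single well-chosen test direction produces a strictly positive value, so no further computation with $\Fcal_{\gamma,\varepsilon}'$ is needed.
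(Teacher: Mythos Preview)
Your proposal is correct and follows essentially the same route as the paper: both reduce to the question of linearity of the directional derivative, and both handle the ``if'' direction identically by noting that ${\max}'(\alphasum;\,\cdot\,)$ is linear when $\Omega_0=\emptyset$. For the converse, the paper derives (exactly as you do) that linearity would force $(\etasum)_{i_1,i_2}\le 0$ on $\Omega_0$ for every direction and then simply asserts that a contradicting direction is ``easy to construct''; your strong-monotonicity argument with the explicit test direction $h=(e_r,e_s)$ and the observation $G(0)=0$ is a clean way to supply precisely that missing step, so your converse is in fact more complete than the paper's.
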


\begin{proof}
	Because $\Scal_{\gamma, \varepsilon}$ is Lipschitz continuous, it is sufficient to show that $\Omega_0 = \emptyset$ if and only if $\Scal_{\gamma, \varepsilon}'(\mu; \substitute)$ is linear.
	
	On the one hand, if $\Omega_0 = \emptyset$, then $\Fcal_{\gamma, \varepsilon}'(\mu; h)$ is linear \wrt $h$ and $\frac1\gamma {\max}'(\alphasum; s_1 \oplus s_2)$ is linear \wrt $s = (s_1, s_2)$, implying the linearity of $\Scal_{\gamma, \varepsilon}'(\mu; \substitute)$.
	
	On the other hand, if $\Scal_{\gamma, \varepsilon}'(\mu; \substitute)$ is linear, then we find that
	\begin{align*}
		0 &= \Scal_{\gamma, \varepsilon}'(\mu; h) + \Scal_{\gamma, \varepsilon}'(\mu; -h) \\
		&= \frac1\gamma
		\begin{cases}
			\eta_1^{i_1} + \eta_2^{i_2} + \theta_1^{i_1} + \theta_2^{i_2}, & \text{if} ~ (i_1, i_2) \in \Omega_+, \\
			\max\{0, \eta_1^{i_1} + \eta_2^{i_2}\} + \max\{0, \theta_1^{i_1} + \theta_2^{i_2}\}, & \text{if} ~ (i_1, i_2) \in \Omega_0, \\
			0, & \text{if} ~ (i_1, i_2) \in \Omega_+,
		\end{cases}
	\end{align*}
	for arbitrary $h$ with $\eta = (\eta_1, \eta_2) = \Fcal_{\gamma, \varepsilon}'(\mu; h)$ and $\theta = (\theta_1, \theta_2) = \Fcal_{\gamma, \varepsilon}'(\mu; -h)$. In particular,
	\begin{equation}
	\label{eq:DERIV-SolOpDirecDerivNonpos}
		\eta_1^{i_1} + \eta_2^{i_2} \leq 0
		\quad \text{and} \quad
		\theta_1^{i_1} + \theta_2^{i_2} \leq 0
		\quad \text{for all} ~ (i_1, i_2) \in \Omega_0.
	\end{equation}
	However, using the bijectivity of the solution map $\Fcal_{\gamma, \varepsilon}$ it is easy to construct a direction $\tilde{h}$ such that $\tilde{\eta} = \Fcal_{\gamma, \varepsilon}'(\mu; \tilde{h})$ contradicts \cref{eq:DERIV-SolOpDirecDerivNonpos}. Hence $\Omega_0$ must be empty.
\end{proof}

In the following, we denote the set of points at which $\Scal_{\gamma, \varepsilon}$ is differentiable by $\Dcal_{\Scal_{\gamma, \varepsilon}}$, \ie
\begin{equation*}
	\Dcal_{\Scal_{\gamma, \varepsilon}} 
	= \{
		\mu \in \R^{n_1} \times \R^{n_2}
		\colon \Omega_0(\mu) = \emptyset
	\}.
\end{equation*}
To be able to write the derivatives and (Bouligand) subgradients of $\Scal_{\gamma, \varepsilon}$ in a compact form, we first need to establish some notation.

\begin{definition}
\label{df:DERIV-CharMatMaskOpNewtonMat}
	Let $\Acal \subset \Omega$ be an arbitrary index set. Then, we define
	\begin{enumerate}
		\item the \emph{characteristic matrix} $\chi(\Acal) \in \R^{n_1 \times n_2}$ of the set $\Acal$ by
		\begin{equation*}
			\chi(\Acal)_{i_1, i_2}
			\coloneqq \begin{cases}
				1, & \text{if} ~ (i_1, i_2) \in \Acal, \\
				0, & \text{else;}
			\end{cases}
		\end{equation*}
		
		\item the \emph{masking operator} $\Mcal(\Acal) \colon \R^{n_1 \times n_2} \to \R^{n_1 \times n_2}$ associated with $\Acal$ as an entrywise multiplication with the characteristic matrix, \ie
		\begin{equation*}
			\Mcal(\Acal)(M)
			\coloneqq \bigl( \chi(\Acal)_{i_1, i_2} M_{i_1, i_2} \bigr)_{(i_1, i_2) \in \Omega};
		\end{equation*}
		
		\item\label{df:DERIV-CharMatMaskOpNewtonMat-SysMat} the \emph{system matrix} $\Ncal(\Acal) \in \R^{(n_1+n_2) \times (n_1+n_2)}$ associated with $\Acal$ by
			\begin{equation*}
				\Ncal(\Acal)
				\coloneqq \begin{pmatrix}
					\diag \bigl( \chi(\Acal) \one \bigr) & \chi(\Acal) \\
					\chi(\Acal)^\top & \diag \bigl( \chi(\Acal)^\top \one \bigr)
				\end{pmatrix}.
			\end{equation*}
	\end{enumerate}
	Let $\Bcal \subset \Omega$ be another index set such that $\Acal \subset \Bcal$. Then, we say that
	\begin{enumerate}
		\item[4.] $\Acal$ has an \emph{outer structure} \wrt $\Bcal$, if there exist vectors $v_1 \in \R^{n_1}$ and $v_2 \in \R^{n_2}$ such that
		\begin{equation*}
			(v_1 \oplus v_2)_{\Acal} > 0
			\quad \text{and} \quad
			(v_1 \oplus v_2)_{\Bcal \setminus \Acal} < 0.
		\end{equation*}
		The above notation means that the entries of the matrix $v_1 \oplus v_2$ shall be strictly positive and strictly negative for all indices belonging to the index sets $\Acal$ and $\Bcal \setminus \Acal$, respectively.
	\end{enumerate}
\end{definition}

\begin{remark}
	A certain instance of the matrix $\Ncal(\Acal)$ from \cref{df:DERIV-CharMatMaskOpNewtonMat}.\cref{df:DERIV-CharMatMaskOpNewtonMat-SysMat} also plays an important role in \cite[Section 3]{lorenz2021quadratically}. Therein, the authors identify the matrix $\Ncal \bigl( \{(i_1, i_2) \colon \alpha_1^{i_1} + \alpha_2^{i_2} - c_{i_1, i_2} \geq 0\} \bigr)$ to be a Newton derivative of the semismooth mapping
	\begin{equation*}
		F(\alpha_1, \alpha_2)
		\coloneqq \begin{pmatrix}
			(\alphasum)_+^{\phantomtop} \one - \gamma \mu_1 \\[0.5em]
			(\alphasum)_+^\top \one - \gamma \mu_2
		\end{pmatrix}.
	\end{equation*}
	This mapping describes the regularized Hitchcock problem's first order optimality conditions from \cref{th:REG-RegHitchDualSys}. The authors use this Newton derivative to implement a semismooth Newton method to solve the regularized Hitchcock problem.
\end{remark}

Let us begin with the characterization of the derivative of $\Scal_{\gamma, \varepsilon}$ at the points at which it is differentiable.

\begin{theorem}
\label{th:DERIV-RegMTPMapDeriv}
	If $\mu \in \Dcal_{\Scal_{\gamma, \varepsilon}}$ is a point where $\Scal_{\gamma, \varepsilon}$ is differentiable, then
	\begin{equation*}
		\Scal_{\gamma, \varepsilon}'(\mu)
		= \Mcal(\Omega_+(\mu)) \circ \oplus \circ \bigl( \Ncal(\Omega_+(\mu)) + \gamma \varepsilon I \bigr)^{-1}
	\end{equation*}
	Here, $I$ refers to the $(n_1+n_2)$-dimensional identity matrix.
\end{theorem}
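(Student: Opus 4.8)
The plan is to start from the directional derivative already supplied by Proposition \ref{pp:DERIV-RegMTPMapDirecDeriv} and simplify it using the defining property $\Omega_0 = \emptyset$ of a differentiability point. Since $\mu \in \Dcal_{\Scal_{\gamma, \varepsilon}}$, the index set decomposes as $\Omega = \Omega_+ \,\dot{\cup}\, \Omega_-$, so the entrywise directional derivative of $\max$ reduces to ${\max}'(\alphasum; \etasum)_{i_1, i_2} = (\etasum)_{i_1, i_2}$ on $\Omega_+$ and $=0$ on $\Omega_-$. In other words, ${\max}'(\alphasum; \etasum) = \Mcal(\Omega_+)(\etasum)$, the masking of the outer sum against the positive index set, whence $\Scal_{\gamma, \varepsilon}'(\mu; h) = \frac1\gamma \Mcal(\Omega_+)(\etasum)$ with $(\eta_1, \eta_2) = \Fcal_{\gamma, \varepsilon}'(\mu; h)$.

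Next I would feed this same simplification into the defining system \eqref{eq:DERIV-SolOpDirecDeriv} for $\eta$. The central computation is to rewrite the two marginal expressions in matrix form: expanding the entrywise product and summing over the respective index yields $\Mcal(\Omega_+)(\etasum)\one = \diag(\chi(\Omega_+)\one)\eta_1 + \chi(\Omega_+)\eta_2$ and $\Mcal(\Omega_+)(\etasum)^\top \one = \chi(\Omega_+)^\top \eta_1 + \diag(\chi(\Omega_+)^\top \one)\eta_2$. Stacking the two equations of \eqref{eq:DERIV-SolOpDirecDeriv} then turns the system into the block-linear identity $(\Ncal(\Omega_+) + \gamma \varepsilon I)(\eta_1, \eta_2)^\top = \gamma (h_1, h_2)^\top$, with $\Ncal(\Omega_+)$ being precisely the system matrix from Definition \ref{df:DERIV-CharMatMaskOpNewtonMat}(3).

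To conclude I would argue that $\Ncal(\Omega_+) + \gamma \varepsilon I$ is invertible. The matrix $\Ncal(\Omega_+)$ is symmetric and positive semidefinite, since completing the square gives $(x, y)\Ncal(\Omega_+)(x, y)^\top = \sum_{(i_1, i_2) \in \Omega_+} (x^{i_1} + y^{i_2})^2 \geq 0$ for all $(x, y) \in \R^{n_1} \times \R^{n_2}$; adding $\gamma \varepsilon I$ with $\gamma, \varepsilon > 0$ makes it positive definite, hence invertible (this is just the strong-monotonicity and coercivity observation already invoked in the proof of the directional-derivative lemma). Solving gives $(\eta_1, \eta_2) = \gamma (\Ncal(\Omega_+) + \gamma \varepsilon I)^{-1} h$; substituting back into $\Scal_{\gamma, \varepsilon}'(\mu; h) = \frac1\gamma \Mcal(\Omega_+)(\etasum)$ and using linearity of the outer sum $\oplus$ and of the masking operator $\Mcal(\Omega_+)$, the prefactors $\frac1\gamma$ and $\gamma$ cancel and we land on $\Scal_{\gamma, \varepsilon}'(\mu; h) = \Mcal(\Omega_+)\bigl(\oplus\bigl((\Ncal(\Omega_+) + \gamma \varepsilon I)^{-1} h\bigr)\bigr)$, which is exactly the claimed composition. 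As the right-hand side is linear in $h$ and $\Scal_{\gamma, \varepsilon}$ is differentiable at $\mu$ by hypothesis, this linear map is the Fréchet derivative.

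I expect the only genuinely non-routine step to be the bookkeeping that recasts the masked marginal sums $\Mcal(\Omega_+)(\etasum)\one$ and $\Mcal(\Omega_+)(\etasum)^\top \one$ into the block matrix $\Ncal(\Omega_+)$, together with tracking the $\gamma$ factors so that the prefactor cancels correctly to match the stated formula; the invertibility itself is elementary via the completing-the-square identity above.
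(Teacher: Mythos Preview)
Your proposal is correct and follows essentially the same route as the paper: use $\Omega_0=\emptyset$ to linearize the ${\max}'$ term, rewrite the system \eqref{eq:DERIV-SolOpDirecDeriv} as $(\Ncal(\Omega_+)+\gamma\varepsilon I)\eta=\gamma h$, invert, and substitute into Proposition~\ref{pp:DERIV-RegMTPMapDirecDeriv}. The only cosmetic difference is that the paper justifies positive semidefiniteness of $\Ncal(\Omega_+)$ via diagonal dominance rather than your completing-the-square identity.
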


\begin{proof}
	Let $h = (h_1, h_2)$ be an arbitrary direction. If $\mu$ is a point where $\Scal_{\gamma, \varepsilon}$ is differentiable, then $\Omega_0 = \emptyset$ and the directional derivative $\eta = (\eta_1, \eta_2) = \Fcal_{\gamma, \varepsilon}'(\mu; h)$ of the solution operator $\Fcal_{\gamma, \varepsilon}$ satisfies
	\begin{align*}
		&\sum_{i_2 \colon (i_1, i_2) \in \Omega_+} \bigl( \eta_1^{i_1} + \eta_2^{i_2} \bigr) + \gamma \varepsilon \eta_1^{i_1}
		= \gamma h_1^{i_1}
		\quad \text{for all} ~ i_1 \in \Omega_1, \\
		&\sum_{i_1 \colon (i_1, i_2) \in \Omega_+} \bigl( \eta_1^{i_1} + \eta_2^{i_2} \bigr) + \gamma \varepsilon \eta_2^{i_2}
		= \gamma h_2^{i_2}
		\quad \text{for all} ~ i_2 \in \Omega_2.
	\end{align*}
	With the definitions from \cref{df:DERIV-CharMatMaskOpNewtonMat}, this can equivalently be written as
	\begin{equation*}
		\bigl( \Ncal(\Omega_+) + \gamma \varepsilon I \bigr) \eta
		= \gamma h.
	\end{equation*}
	By construction, the matrix $\Ncal(\Omega_+)$ is nonnegative, symmetric, and diagonally dominant, hence positive semidefinite. The directional derivative $\eta$ therefore takes the form
	\begin{equation*}
		\eta
		= \gamma \bigl( \Ncal(\Omega_+) + \gamma \varepsilon I \bigr)^{-1} h,
	\end{equation*}
	implying that
	\begin{equation*}
		\Fcal_{\gamma, \varepsilon}'(\mu) = \gamma \bigl( \Ncal(\Omega_+) + \gamma \varepsilon I \bigr)^{-1}.
	\end{equation*}
	Consequently, the directional derivative from \cref{pp:DERIV-RegMTPMapDirecDeriv} can be written as
	\begin{equation*}
		\Scal_{\gamma, \varepsilon}'(\mu; h)
		= \frac1\gamma \bigl( \Mcal(\Omega_+) \circ \oplus \circ \Fcal_{\gamma, \varepsilon}'(\mu) \bigr) h,
	\end{equation*}
	which yields the claim.
\end{proof}

Now, we consider the points at which $\Scal_{\gamma, \varepsilon}$ is not differentiable. \textcolor{cyan}{For these points, we can find a precise characterization of $\Scal_{\gamma, \varepsilon}$'s  Bouligand subdifferential.}

\begin{theorem}
\label{th:DERIV-RegMTPMapBouliSubdiff}
	If $\mu \in (\R^{n_1} \times \R^{n_2}) \setminus \Dcal_{\Scal_{\gamma, \varepsilon}}$ is a point where $\Scal_{\gamma, \varepsilon}$ is not differentiable, then
	\begin{align*}
		\partial_B \Scal_{\gamma, \varepsilon}(\mu)
		= \Bigl\{
			\Mcal(\Omega_+(\mu) \cup \Acal) \circ \oplus \circ \bigl( \Ncal(\Omega_+(\mu) \cup \Acal) + \gamma \varepsilon I \bigr)^{-1} \colon
			\begin{matrix}
				\Acal ~ \text{has an outer} \\ \text{structure w.r.t.} ~ \Omega_0(\mu)
			\end{matrix}
		\Bigr\}.
	\end{align*}
\end{theorem}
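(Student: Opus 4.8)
The plan is to prove the two set inclusions separately, in both directions exploiting that, by Theorem \ref{th:DERIV-RegMTPMapDeriv}, at every differentiability point $\nu \in \Dcal_{\Scal_{\gamma, \varepsilon}}$ the derivative is completely determined by the single index set $\Omega_+(\nu)$ via $\Scal_{\gamma, \varepsilon}'(\nu) = \Mcal(\Omega_+(\nu)) \circ \oplus \circ (\Ncal(\Omega_+(\nu)) + \gamma \varepsilon I)^{-1}$. Since $\Omega$ is finite, the assignment $\Gcal \mapsto \Mcal(\Gcal) \circ \oplus \circ (\Ncal(\Gcal) + \gamma \varepsilon I)^{-1}$ takes only finitely many matrix values, so any convergent sequence of such matrices is eventually constant equal to its limit. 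Throughout I write $\alpha = \Fcal_{\gamma, \varepsilon}(\mu)$ and $\alpha_k = \Fcal_{\gamma, \varepsilon}(\mu_k) = (\alpha_{1,k}, \alpha_{2,k})$ for a sequence $\mu_k \to \mu$, recalling from Lemma \ref{lm:DERIV-RegMTPMapLipschitz} that $\Fcal_{\gamma, \varepsilon}$ and its inverse are continuous, so that convergence transfers in both directions.

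For the inclusion $\subseteq$, take $M \in \partial_B \Scal_{\gamma, \varepsilon}(\mu)$, say $M = \lim_k \Scal_{\gamma, \varepsilon}'(\mu_k)$ with $\mu_k \to \mu$ and $\mu_k \in \Dcal_{\Scal_{\gamma, \varepsilon}}$. By the finiteness remark, $\Scal_{\gamma, \varepsilon}'(\mu_k) = M$ for all large $k$. Since $\alpha_k \to \alpha$ and $(\alpha_1 \oplus \alpha_2 - c)_{i_1, i_2}$ is strictly positive on $\Omega_+$ and strictly negative on $\Omega_-$, these strict signs persist for large $k$, whence $\Omega_+ \subseteq \Omega_+(\mu_k)$ and $\Omega_+(\mu_k) \cap \Omega_- = \emptyset$. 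Fixing one such $k$ and setting $\Gcal := \Omega_+(\mu_k)$, we get $\Omega_+ \subseteq \Gcal \subseteq \Omega_+ \cup \Omega_0$, so $\Acal := \Gcal \setminus \Omega_+ \subseteq \Omega_0$ and $M = \Mcal(\Omega_+ \cup \Acal) \circ \oplus \circ (\Ncal(\Omega_+ \cup \Acal) + \gamma \varepsilon I)^{-1}$. To verify the outer structure of $\Acal$ with respect to $\Omega_0$, note that $\Omega_0(\mu_k) = \emptyset$ forces every index of $\Omega_0$ into $\Omega_+(\mu_k)$ or $\Omega_-(\mu_k)$; since $(\alpha_1 \oplus \alpha_2 - c)_{i_1, i_2} = 0$ on $\Omega_0$, the perturbation $w_k := \alpha_k - \alpha = (w_{1,k}, w_{2,k})$ satisfies $(w_{1,k} \oplus w_{2,k})_{i_1, i_2} = (\alpha_{1,k} \oplus \alpha_{2,k} - c)_{i_1, i_2}$ there, which is strictly positive exactly on $\Acal$ and strictly negative exactly on $\Omega_0 \setminus \Acal$. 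Thus $(w_{1,k}, w_{2,k})$ witnesses the outer structure of $\Acal$ in the sense of Definition \ref{df:DERIV-CharMatMaskOpNewtonMat}, giving the claimed form of $M$.

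For the reverse inclusion $\supseteq$, let $\Acal \subseteq \Omega_0$ have an outer structure with respect to $\Omega_0$, witnessed by $v_1 \in \R^{n_1}$ and $v_2 \in \R^{n_2}$, and put $\alpha_k := \alpha + t_k (v_1, v_2)$ for $t_k \searrow 0$ together with $\mu_k := \Fcal_{\gamma, \varepsilon}^{-1}(\alpha_k)$. Continuity of $\Fcal_{\gamma, \varepsilon}^{-1}$ gives $\mu_k \to \mu$, and $\Fcal_{\gamma, \varepsilon}(\mu_k) = \alpha_k$ by construction. Reading off the signs of $\alpha_{1,k} \oplus \alpha_{2,k} - c = (\alpha_1 \oplus \alpha_2 - c) + t_k (v_1 \oplus v_2)$ entrywise shows that, for all sufficiently small $t_k$, the entries stay positive on $\Omega_+$, stay negative on $\Omega_-$, turn positive on $\Acal$, and turn negative on $\Omega_0 \setminus \Acal$. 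Hence $\Omega_0(\mu_k) = \emptyset$, so $\mu_k \in \Dcal_{\Scal_{\gamma, \varepsilon}}$ with $\Omega_+(\mu_k) = \Omega_+ \cup \Acal$, and Theorem \ref{th:DERIV-RegMTPMapDeriv} produces the constant sequence $\Scal_{\gamma, \varepsilon}'(\mu_k) = \Mcal(\Omega_+ \cup \Acal) \circ \oplus \circ (\Ncal(\Omega_+ \cup \Acal) + \gamma \varepsilon I)^{-1}$, whose limit lies in $\partial_B \Scal_{\gamma, \varepsilon}(\mu)$.

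I expect the main difficulty to be conceptual bookkeeping of the correspondence between approximating sequences and index sets rather than any hard estimate. The crucial insight is that on the critical set $\Omega_0$, where $\alpha_1 \oplus \alpha_2 - c$ vanishes, the perturbation $\alpha_k - \alpha$ coincides with the outer-structure vector, so that the geometric condition in Definition \ref{df:DERIV-CharMatMaskOpNewtonMat} (4) is precisely the condition characterizing which sign patterns on $\Omega_0$ are attainable by nearby differentiability points. The only technical care needed is to invoke finiteness of the value set to pass from convergence of derivatives to eventual equality, and to choose $t_k$ small enough that the strict signs on $\Omega_+$ and $\Omega_-$ are preserved.
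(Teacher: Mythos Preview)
Your proof is correct and follows essentially the same approach as the paper: both inclusions are obtained by transporting sequences through the homeomorphism $\Fcal_{\gamma,\varepsilon}$, using the perturbation $\alpha_k - \alpha$ as the outer-structure witness in the $\subseteq$ direction and the outer-structure witness $(v_1,v_2)$ to build the approximating sequence in the $\supseteq$ direction. Your finiteness argument (only finitely many possible index sets, hence the derivative sequence is eventually constant) is a slightly cleaner packaging of the paper's observation that the integer matrices $\Ncal(\Omega_+(\mu_k))$ are bounded, and your ``for sufficiently small $t_k$'' replaces the paper's explicit choice of $\delta$; neither difference is substantive.
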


\begin{proof}
	To check the first inclusion, let $G \in \partial_B \Scal_{\gamma, \varepsilon}(\mu)$ be a given Bouligand subgradient of $\Scal_{\gamma, \varepsilon}$ at $\mu$. By \cref{df:DERIV-BoulSubdiff}, there exists a sequence $(\mu_k)_{k \in \N} \subset \Dcal_{\Scal_{\gamma, \varepsilon}}$ such that $\mu_k \to \mu$ as $k \to \infty$ and
	\begin{equation*}
		G = \lim_{k \to \infty} \Scal_{\gamma, \varepsilon}'(\mu_k)
		= \lim_{k \to \infty} \Mcal( \Omega_+(\mu_k)) \circ \oplus \circ \bigl( \Ncal(\Omega_+(\mu_k)) + \gamma \varepsilon I \bigr)^{-1},
	\end{equation*}
	where the second equality stems from \cref{th:DERIV-RegMTPMapDeriv}. By construction, the integer matrices $\Ncal(\Omega_+(\mu_k))$ are bounded. As a consequence, there exists some $K \in \N$ such that $\Omega_+(\mu_k) = \Omega_+^K \coloneqq \Omega_+(\mu_K)$ for all $k \geq K$ and $G = \Mcal(\Omega_+^K) \circ \oplus \circ \bigl( \Ncal(\Omega_+^K) + \gamma \varepsilon I \bigr)^{-1}$. Moreover, because $\Fcal_{\gamma, \varepsilon}$ is (Lipschitz) continuous, there exists another $K \in \N$ such that
	\begin{equation*}
		\Omega_+(\mu) \subset \Omega_+^K,
		\quad \Omega_-(\mu) \subset \Omega_-^K,
		\quad \text{and still} \quad
		G = \Mcal(\Omega_+^K) \circ \oplus \circ \bigl( \Ncal(\Omega_+^K) + \gamma \varepsilon I \bigr)^{-1}.
	\end{equation*}
	Let us set $\Acal \coloneqq \Omega_+^K \setminus \Omega_+(\mu) \subset \Omega$. By \cref{pp:DERIV-RegMTPMapDiffPoints}, 
	\begin{equation*}
		\Omega_+(\mu) ~ \dot{\cup} ~ \Omega_0(\mu) ~ \dot{\cup} ~ \Omega_-(\mu)
		= \Omega
		= \Omega_+^K ~ \dot{\cup} ~ \Omega_-^K
	\end{equation*}
	and therefore $\Acal \subset \Omega_0(\mu)$. Moreover, $\Omega_0(\mu) \setminus \Acal \subset \Omega_-^K$ and
	\begin{align*}
		&(\alpha_{1,K}^{i_1} - \alpha_1^{i_1}) + (\alpha_{2,K}^{i_2} - \alpha_2^{i_2}) \\
		&\quad = (\alpha_{1,K}^{i_1} + \alpha_{2,K}^{i_2} - c_{i_1, i_2}) - (\alpha_1^{i_1} + \alpha_2^{i_2} - c_{i_1, i_2}) ~
		\begin{cases}
			> 0, & \text{if} ~ (i_1, i_2) \in \Acal, \\
			< 0, & \text{if} ~ (i_1, i_2) \in \Omega_0(\mu) \setminus \Acal.
		\end{cases}
	\end{align*}
	Consequently, $\Acal$ has an outer structure \wrt $\Omega_0(\mu)$ so that $G = \Mcal(\Omega_+(\mu) \cup \Acal) \circ \oplus \circ \bigl( \Ncal(\Omega_+(\mu) \cup \Acal) + \gamma \varepsilon I \bigr)^{-1}$ is an element of the set on the right-hand side of the equation in the formulation of the theorem.
	
	To show the converse inclusion, let $\Acal$ have an outer structure \wrt $\Omega_0(\mu) \neq \emptyset$. By definition, there exist $v_1 \in \R^{n_1}$ and $v_2 \in \R^{n_2}$ with $(v_1 \oplus v_2)_{\Acal} > 0$ and $(v_1 \oplus v_2)_{\Omega_0(\mu) \setminus \Acal} < 0$. We set
	\begin{equation*}
		\delta \coloneqq \frac12 \norm[\infty]{v_1 \oplus v_2}^{-1} \min_{(j_1, j_2) \in \Omega_+(\mu) \cup \Omega_-(\mu)} |\alpha_1^{j_1} + \alpha_2^{j_2} - c_{j_1, j_2}| \in \R_{>0}
	\end{equation*}
	and consider the sequence of points defined by $(\alpha_{1,k}, \alpha_{2,k}) \coloneqq (\alpha_1, \alpha_2) + \frac{\delta}{k} (v_1, v_2)$ for all $k \in \N$. Because $\Fcal_{\gamma, \varepsilon}^{-1}$ is continuous,
	\begin{equation*}
		\mu_k \coloneqq \Fcal_{\gamma, \varepsilon}^{-1}(\alpha_{1,k}, \alpha_{2,k}) ~ \xrightarrow[k \to \infty]{} ~ \Fcal_{\gamma, \varepsilon}^{-1}(\alpha_1, \alpha_2) = \mu.
	\end{equation*}
	By construction,
	\begin{align*}
		&\alpha_{1,k}^{i_1} + \alpha_{2,k}^{i_2} - c_{i_1, i_2} \\
		&\quad = (\alpha_1^{i_1} + \alpha_2^{i_2} - c_{i_1, i_2}) + \frac{\delta}{k}(v_1^{i_1} + v_2^{i_2}) ~
		\begin{cases}
			> 0, & \text{if} ~ (i_1, i_2) \in \Omega_+(\mu) \cup \Acal, \\
			< 0, & \text{if} ~ (i_1, i_2) \in \Omega_-(\mu) \cup (\Omega_0(\mu) \setminus \Acal), \\
		\end{cases}
	\end{align*}
	for all $k \in \N$. Thus, $\Omega_+(\mu_k) = \Omega_+(\mu) \cup \Acal$ and $\Omega_-(\mu_k) = \Omega_-(\mu) \cup (\Omega_0(\mu) \setminus \Acal)$ and hence $\mu_k \in \Dcal_{\Scal_{\gamma, \varepsilon}}$ for all $k \in \N$. Therefore, by \cref{th:DERIV-RegMTPMapDeriv},
	\begin{align*}
		&\Mcal(\Omega_+(\mu) \cup \Acal) \circ \oplus \circ \bigl( \Ncal(\Omega_+(\mu) \cup \Acal) + \gamma \varepsilon I \bigr)^{-1} \\
		&\quad = \lim_{k \to \infty} \Mcal(\Omega_+(\mu_k)) \circ \oplus \circ \bigl( \Ncal(\Omega_+(\mu_k)) + \gamma \varepsilon I \bigr)^{-1}
		= \lim_{k \to \infty} \Scal_{\gamma, \varepsilon}'(\mu_k)
	\end{align*}
	is an element of the Bouligand subdifferential of $\Scal_{\gamma, \varepsilon}$ at $\mu$, as claimed.
\end{proof}

\begin{remark}
	\cref{th:DERIV-RegMTPMapBouliSubdiff} implicitly provides a description of the Bouligand subdifferential of $\Scal_{\gamma, \varepsilon}$ for all points $\mu \in \R^{n_1} \times \R^{n_2}$, \ie even for the points where $\Scal_{\gamma, \varepsilon}$ is differentiable. Let $\mu \in \Dcal_{\Scal_{\gamma, \varepsilon}}$ be such a point. Then by \cref{pp:DERIV-RegMTPMapDirecDeriv}, it holds that $\Omega_0(\mu) = \emptyset$ and so that set on the right-hand side of the characterization in \cref{th:DERIV-RegMTPMapBouliSubdiff} reduces to
	\begin{equation*}
		\bigl\{ \Mcal(\Omega_+(\mu)) \circ \oplus \circ \bigl( \Ncal(\Omega_+(\mu)) + \gamma \varepsilon I \bigr)^{-1} \bigr\}
		= \{\Scal_{\gamma, \varepsilon}'(\mu)\},
	\end{equation*}
	see \cref{th:DERIV-RegMTPMapDeriv}.

	Moreover, because $\Fcal_{\gamma, \varepsilon}$ is continuous and $\Omega_0(\mu) = \emptyset$, the set $\Omega_+(\mu)$ is constant in a neighborhood of the point $\mu$.  By the characterization in \cref{th:DERIV-RegMTPMapDeriv}, the same holds true for the derivative $\Scal_{\gamma, \varepsilon}'(\mu)$ which implies that $\Scal_{\gamma, \varepsilon}$ is continuously differentiable in a neighborhood of $\mu$. Consequently,
	\begin{equation*}
		\partial \Scal_{\gamma, \varepsilon}(\mu)
		= \partial_B \Scal_{\gamma, \varepsilon}(\mu)
		= \{\Scal_{\gamma, \varepsilon}'(\mu)\},
	\end{equation*}
	see \eg \cite[Proposition 2.2]{ulbrich2011semismooth}.
\end{remark}


\subsection{The Reduced Bilevel Hitchcock Problem}\label{sc:REDU}
Let us recall the bilevel Hitchcock problem from the beginning of \cref{sc:PROB}. For \textcolor{cyan}{lower semicontinuous} $\Jcal \colon \R^{n_1 \times n_2} \times \R^{n_1} \to \R \cup \{+\infty\}$ as well as $\mu_2^\drm \in \R^{n_2}$ and $c_\drm \in \R^{n_1 \times n_2}$, the problem is given by
\begin{equation}
\tag{BH}
\label{pr:PROB-BilevelHitch-Recap}
	\begin{array}{rl}
		\displaystyle \inf_{\pi, \mu_1} & \Jcal(\pi, \mu_1) \\[0.5em]
		& \pi \in \R^{n_1 \times n_2}, \quad \mu_1 \in \R^{n_1}, \\[0.5em]
		& \mu_1 \geq 0, \quad \mu_1^\top \one = {\mu_2^\drm}^\top \one, \\[0.5em]
		& \pi ~ \text{solves \cref{pr:PROB-Hitch} w.r.t.} ~ \mu_1, ~ \mu_2^\drm, ~ \text{and} ~ c_\drm.
	\end{array}
\end{equation}
In this problem, we replace the constraint on $\pi$ by the regularized marginal-to-transport mapping to arrive at the problem
\begin{equation*}
\label{pr:REDU-TwiceRegBilevelHitch}
\tag{BH$_\gamma^\varepsilon$}
	\begin{array}{rl}
		\inf\limits_{\pi, \mu_1} & \Jcal(\pi, \mu_1) \\[0.5em]
		\text{s.t.} & \mu_1 \in \R^{n_1}, \quad \mu_1 \geq 0, \quad \mu_1^\top \one = {\mu_2^\drm}^\top \one, \\[0.5em]
		& \pi \in \R^{n_1 \times n_2}, \quad \pi = \Scal_{\gamma, \varepsilon}(\mu_1, \mu_2^\drm),
	\end{array}
\end{equation*}
which is in turn equivalent to the \emph{reduced bilevel Hitchcock problem}
\begin{equation}
\label{pr:REDU-ReduBilevelHitch}
\tag{RBH$_\gamma^\varepsilon$}
	\begin{array}{rl}
		\inf\limits_{\mu_1} & \Jcal \bigl( \Scal_{\gamma, \varepsilon}(\mu_1, \mu_2^\drm), \mu_1 \bigr) \\
		\text{s.t.} & \mu_1 \in \R^{n_1}, \quad \mu_1 \geq 0, \quad \mu_1^\top \one = {\mu_2^\drm}^\top \one.
	\end{array}
\end{equation}
We abbreviate \cref{pr:REDU-ReduBilevelHitch}'s objective by $f_{\gamma, \varepsilon}(\mu_1) \coloneqq \Jcal \bigl( \Scal_{\gamma, \varepsilon}(\mu_1, \mu_2^\drm), \mu_1 \bigr)$ and call this the \emph{reduced target function}.


In the case that $\Jcal$ is sufficiently smooth, the composition of $\Jcal$ and $\Scal_{\gamma, \varepsilon}$ is locally Lipschitz (thus differentiable almost everywhere) and bears Clarke subgradients at any point.
\begin{proposition}
	\label{pp:REDU-CompoDiff}
	Let $\Jcal \in C^1(\R^{n_1 \times n_2} \times \R^{n_1})$ be continuously differentiable. Then, the composition $F_{\gamma, \varepsilon}(\mu_1, \mu_2) \coloneqq \Jcal \bigl( \Scal_{\gamma, \varepsilon}(\mu_1, \mu_2), \mu_1 \bigr)$ is locally Lipschitz continuous and differentiable almost everywhere on $\R^{n_1} \times \R^{n_2}$. Moreover, for any point $\mu = (\mu_1, \mu_2) \in \R^{n_1} \times \R^{n_2}$ and every $\Acal \subset \Omega$ that has an outer structure \wrt the set $\Omega_0(\mu)$, an element of the Clarke subdifferential of $F_{\gamma, \varepsilon}$ at $\mu$ is given by
	\begin{equation*}
		g \coloneqq p + \nabla_{\mu_1} \Jcal \bigl( \Scal_{\gamma, \varepsilon}(\mu_1, \mu_2^\drm), \mu_1 \bigr) ~ \in ~ \partial F_{\gamma, \varepsilon}(\mu),
	\end{equation*}
	where
	\begin{equation*}
		p
		\coloneqq \bigl( \Ncal(\Omega_+(\mu) \cup \Acal ) + \gamma \varepsilon I \bigr)^{-1}
		\begin{pmatrix}
			M^{\phantomtop} \one \\
			M^\top \one
		\end{pmatrix} ~ \in \R^{n_1} \times \R^{n_2}
	\end{equation*}
	and
	\begin{equation*}
		M \coloneqq \Mcal \bigl( \Omega_+(\mu) \cup \Acal \bigr) \nabla_\pi \Jcal \bigl( \Scal_{\gamma, \varepsilon}(\mu), \mu_1 \bigr)
		~ \in \R^{n_1 \times n_2}.
	\end{equation*}
\end{proposition}

\begin{proof}
	The local Lipschitz continuity and thus the almost everywhere differentiability of $F_{\gamma, \varepsilon}$ are obvious from the properties of $\Jcal$ and $\Scal_{\gamma, \varepsilon}$.
	
	To prove the remaining statement, we first define the mapping $\Gcal_{\gamma, \varepsilon}(\mu_1, \mu_2) \coloneqq \bigl( \Scal_{\gamma, \varepsilon}(\mu_1, \mu_2), \mu_1 \bigr)$. By the chain rule for Clarke's generalized gradients (see \eg \cite[Theorem 2.6.6]{clarke1990optimization}), the Clarke subdifferential of the mapping $F_{\gamma, \varepsilon}$ at $\mu$ is given by
	\begin{equation*}
		\partial \bigl( \Jcal \circ \Gcal_{\gamma, \varepsilon} \bigr)(\mu)
		= \nabla_\pi \Jcal(\Gcal_{\gamma, \varepsilon}(\mu)) \, \partial \Scal_{\gamma, \varepsilon}(\mu) + \bigl( \nabla_{\mu_1} \Jcal(\Gcal_{\gamma, \varepsilon}(\mu))^\top, \zero^\top \bigr).
	\end{equation*}
	For any $G \in \partial_B \Scal_{\gamma, \varepsilon}(\mu) \subset \partial \Scal_{\gamma, \varepsilon}(\mu)$ and any $u = (u_1, u_2) \in \R^{n_1} \times \R^{n_2}$, we find that
	\begin{equation*}
		\nabla_\pi \Jcal(\Gcal_{\gamma, \varepsilon}(\mu)) G u
		= \scalarproduct[\R^{n_1} \times \R^{n_2}]{G^* \nabla_\pi \Jcal(\Gcal_{\gamma, \varepsilon}(\mu))}{u},
	\end{equation*}
	where $G^*$ denotes the adjoint of the linear operator $G \colon \R^{n_1} \times \R^{n_2} \to \R^{n_1 \times n_2}$. Let $\Acal \subset \Omega_0(\mu)$ be the \textcolor{cyan}{index} set that realizes $G$, \ie
	\begin{equation*}
		G = \Mcal(\Omega_+(\mu) \cup \Acal) \circ \oplus \circ \bigl(\Ncal(\Omega_+(\mu) \cup \Acal) + \gamma \varepsilon I \bigr)^{-1},
	\end{equation*}
	see \cref{th:DERIV-RegMTPMapBouliSubdiff}. Both $\Mcal(\Omega_+(\mu) \cup \Acal)$ and $\bigl(\Ncal(\Omega_+(\mu) \cup \Acal) + \gamma \varepsilon I \bigr)^{-1}$ are self-adjoint (the latter is symmetric) and the adjoint of the $\oplus$-operator is given by 
	\begin{equation*}
		\oplus^* \colon \R^{n_1 \times n_2} \to \R^{n_1} \times \R^{n_2},
		\quad M \mapsto (\Sigma_1 M, \Sigma_2 M) = (M \one, M^\top \one),
	\end{equation*}
	see the proof of \cref{th:REG-RegHitchDualSys}. Therefore,
	\begin{align*}
		g &\coloneqq G^* \nabla_\pi \Jcal(\Gcal_{\gamma, \varepsilon}(\mu)) + \bigl( \nabla_{\mu_1} \Jcal(\Gcal_{\gamma, \varepsilon}(\mu))^\top, \zero^\top \bigr) \\
		&= \bigl( (\Ncal(\Omega_+(\mu) \cup \Acal) + \gamma \varepsilon I )^{-1} \circ \oplus^* \circ \Mcal(\Omega_+(\mu) \cup \Acal) \bigr) \nabla_\pi \Jcal(\Gcal_{\gamma, \varepsilon}(\mu)) \\
		&\quad + \bigl( \nabla_{\mu_1} \Jcal(\Gcal_{\gamma, \varepsilon}(\mu))^\top, \zero^\top \bigr)
	\end{align*}
	is an element of $\partial \bigl( \Jcal \circ \Gcal_{\gamma, \varepsilon} \bigr)(\mu)$ as claimed.
\end{proof}

\begin{remark}
\label{rm:REDU-ReduTargetSubgrad}
	An earlier version of this manuscript included a purported proof of a result concerning the subdifferential of the reduced target function $f_{\gamma, \varepsilon}$ instead of the composition $F_{\gamma, \varepsilon}$. Upon closer inspection, and as pointed out by a reviewer, the argument relied on an implicit use of the restriction operator in a way that is not justified within the framework of subdifferential calculus. While the derivation of subgradients for the unrestricted composition remains valid, the extension to the restricted case lacks a rigorous foundation.
	
	Despite this, in our numerical experiments in \cref{sc:NUM}, we retain the use of a projection of $F_{\gamma, \varepsilon}$'s subgradients $g = (g_1, g_2) \in \partial F_{\gamma, \varepsilon}(\mu)$ at some point $\mu \in \R^{n_1} \times \R^{n_2}$ onto their first component $g_1$. This approach, while not theoretically justified in full generality, appears to yield correct and stable solutions in practice. The empirical success of this method suggests that the projected subgradients may still capture essential descent directions for the optimization problem at hand. We leave a rigorous justification of this observation as an open question for future work.
\end{remark}

\section{(Preliminary) Numerical Experiments}\label{sc:NUM}
The purpose of this section is not to present a sophisticated numerical scheme for solving the bilevel Hitchcock problems or to compare the performance of different algorithms in the context of these bilevel problems, but rather to validate the results of the previous sections and, in particular, to show that we can indeed approximate solutions to the non-regularized bilevel Hitchcock problems \cref{pr:PROB-BilevelHitch} by solutions to the reduced bilevel Hitchcock problems \cref{pr:REDU-ReduBilevelHitch} when we drive the regularization parameters $\gamma$ and $\varepsilon$ to zero.

To this end, let us consider a toy problem. We assume that there are a unknown source marginal $\mu_1^* \in \R^{n_1}$ as well as a known target marginal $\mu_2^\drm \in \R^{n_2}$ such that $\mu_1^*, \mu_2^\drm \geq 0$ and $\one^\top \mu_1^* = \one^\top \mu_2^\drm = 1$. Moreover, assume that the cost of transportation is given by some known cost matrix $c_\drm \in \R^{n_1 \times n_2}$. According to \cref{th:PROB-HitchSolu}, there is an optimal transportation plan $\pi^*$ between the marginals $\mu_1^*$ and $\mu_2^\drm$ \wrt the cost $c_\drm$, which we do not know in advance. However, we assume that we can observe both $\mu_1^*$ and $\pi^*$ on parts of their domains, namely $D_1 \subset \Omega_1$ and $D \subset \Omega$, respectively. Denote these observations by $\mu_1^\drm$ and $\pi_\drm$.

If we then choose, for some weighting parameter $\lambda > 0$, the tracking-type target function
\begin{equation}
\label{eq:NUM-TrackTypeObj}
	\Jcal(\pi, \mu_1)
	= \frac12 \norm[D]{\pi - \pi_\drm}^2 + \frac{\lambda}{2} \norm[D_1]{\mu_1 - \mu_1^\drm}^2,
\end{equation}
where the norms $\norm[D]{\substitute}$ and $\norm[D_1]{\substitute}$ are just the usual norms restricted to $D$ and $D_1$, respectively, then the bilevel Hitchcock problem \cref{pr:PROB-BilevelHitch} turns into the \emph{transportation identification problem}
\begin{equation*}
\label{pr:NUM-TransIdentProb}
\tag{TI}
	\begin{array}{rl}
		\inf\limits_{\pi, \mu_1} & \displaystyle \frac12 \norm[D]{\pi - \pi_\drm}^2 + \frac{\lambda}{2} \norm[D_1]{\mu_1 - \mu_1^\drm}^2 \\[1em]
		\text{s.t.} & \pi \in \R^{n_1 \times n_2}, \quad \mu_1 \in \R^{n_1}, \\[0.5em]
		& \mu_1 \geq 0, \quad \mu_1^\top \one = {\mu_2^\drm}^\top \one, \\[0.5em]
		& \pi ~ \text{solves \cref{pr:PROB-Hitch} w.r.t.} ~ \mu_1, ~ \mu_2^\drm, ~ \text{and} ~ c_\drm,
	\end{array}
\end{equation*}
which is the problem of reconstructing the unknown source marginal $\mu_1^*$ and the unknown optimal transport plan $\pi^*$ based on the (possibly error-prone) observations $\mu_1^\drm$ and $\pi_\drm$.

The benefits of this type of problem are obvious: if we consider a weighting parameter $\lambda > 0$, the observation domains $D_1 = \Omega_1$ and $D = \Omega$, and the observations $\mu_1^\drm = \mu_1^*$ and $\pi_\drm = \pi^*$, the point $(\pi^*, \mu_1^*)$, which realizes the target value $\Jcal(\pi^*, \mu_1^*) = 0$, is the unique solution to \cref{pr:NUM-TransIdentProb}. By fixing $\mu_1^*$ and $\pi^*$ in advance, we can test our results from the previous sections on a nontrivial bilevel problem whose (unique) solution is already known. \textcolor{cyan}{In particular, the point $(\pi^*, \mu_1^*)$ can be chosen arbitrarily (subject to feasibility)}. If, on the other hand, $D_1$ or $D$ are proper subsets of the domains or if $\mu_1^\drm$ or $\pi_\drm$ incorporate error terms, this allows us to introduce incomplete information or uncertainty to the problem.

We are going to solve the transportation identification problem \cref{pr:NUM-TransIdentProb} by the method we introduced in \cref{sc:REDU}, \ie we choose regularization parameters $\gamma, \varepsilon > 0$ and consider the \emph{reduced transportation identification problem}
\begin{equation*}
\label{pr:NUM-ReduTransIdentProb}
\tag{RTI$_\gamma^\varepsilon$}
	\begin{array}{rl}
		\inf\limits_{\mu_1} & \displaystyle f_{\gamma, \varepsilon}(\mu_1) \\[1em]
		\text{s.t.} & \mu_1 \in \StdSmplx,
	\end{array}
\end{equation*}
with the reduced target function
\begin{equation*}
	f_{\gamma, \varepsilon}(\mu_1) \coloneqq \frac12 \norm[D]{\Scal_{\gamma, \varepsilon}(\mu_1, \mu_2^\drm) - \pi_\drm}^2 + \frac{\lambda}{2} \norm[D_1]{\mu_1 - \mu_1^\drm}^2
\end{equation*}
and the feasible set
\begin{equation*}
	\StdSmplx \coloneqq \bigl\{
	v \in \R^{n_1}
	\colon v \geq 0, ~ v^\top \one = 1
	\bigr\},
\end{equation*}
which is just the standard simplex of $\R^{n_1}$. The tracking-type target function $\Jcal$ from \cref{eq:NUM-TrackTypeObj} is smooth \wrt $\pi$ and $\mu_1$. Consequently, $f_{\gamma, \varepsilon}$ is Lipschitz continuous \wrt $\mu_1$ and, for every point $\mu_1 \in \R^{n_1}$, we can compute an heuristic approximation of its Clarke subgradients, see \cref{pp:REDU-CompoDiff} and \cref{rm:REDU-ReduTargetSubgrad}.

That this approximation actually produces decent results, if we drive $\gamma$ and $\varepsilon$ towards $0$, will be shown in \cref{sc:RESU}. First, however, in \cref{sc:ALG}, we briefly discuss the method with which we solve the problems \cref{pr:NUM-ReduTransIdentProb}, $\gamma, \varepsilon > 0$.


\subsection{Algorithmic Implementation}\label{sc:ALG}
Because $f_{\gamma, \varepsilon}$ is Lipschitz continuous and bears Clarke subgradients at every point, we use the constrained nonsmooth trust region (TR) method from \cite{hillbrecht2024quadratic}, which originated from the (unconstrained) nonsmooth TR method proposed in \cite{christof2020nonsmooth}. The constrained nonsmooth TR method, which we present below in \cref{al:NUM-TrustRegionConstr}, was modified to be able to solve instances of the bilevel Hitchcock problem such as the the reduced transportation identification problem \cref{pr:NUM-ReduTransIdentProb}.

As already mentioned at the beginning of \cref{sc:NUM}, in this paper, we are only interested in an experimental validation of our results and the approximability of solutions to the bilevel Hitchcock problems. Therefore, we only present the constrained nonsmooth TR method applied to the reduced transportation identification problem as a reference and afterwards briefly comment on some details of the implementation. Note that the presented TR method is still subject to ongoing research. A more detailed (performance based) discussion of both the constrained and non-constrained nonsmooth TR methods can be found in \cite{hillbrecht2024quadratic} and \cite{christof2020nonsmooth}, respectively.

\begin{breakablealgorithm}{(a constrained nonsmooth TR method).}
\label{al:NUM-TrustRegionConstr}
	\begin{algorithmic}[1] %
		\State\label{it:Num-Init} \emph{Initialization:} Choose a model function $\phi \colon \R^{n_1} \times \R_+ \times \R^{n_1}$ in the sense of \cite[Assumption 6.9]{hillbrecht2024quadratic}. Moreover, choose the constants
		\begin{equation*}
			R, \Delta_{\text{min}} > 0, \quad
			0 < \eta_1 < \eta_2 < 1, \quad
			0 < \beta_1 < 1 < \beta_2, \quad
			0 < \nu \leq 1,
		\end{equation*}
		an initial point $\mu_{1, 0} \in \StdSmplx$, and an initial TR radius $\Delta_0 > \Delta_{\min}$. Set $k \leftarrow 0$.
		
		\For{$k = 0, 1, 2, \dots$}
		
		\State\label{it:NUM-gk} Compute both a (Clarke) subgradient $g_k \in \partial f_{\gamma, \varepsilon}(\mu_{1,k})$ and a symmetric matrix $H_k \in \R_{\textrm{sym}}^{n_1 \times n_1}$.
		
		\If{\label{it:NUM-StatMeas}$\theta_R(\mu_{1,k}, g_k) = 0$, with the stationarity measure $\theta_R$ being defined by
			\begin{equation*}
				\theta_R(\mu_{1,k}, g_k) \coloneqq - \min_{d \in \StdSmplx - \mu_{1,k}, \norm{d} \leq R} \scalarproduct{g_k}{d}
				\quad \geq 0,
			\end{equation*}
		}
		
		\State\label{it:NUM-Stop} \textbf{stop:} $\mu_{1,k}$ satisfies the generalized variational inequality
		\begin{equation}
		\label{eq:NUM-VI}
		\tag{VI}
			f_{\gamma, \varepsilon}^\circ(\mu_{1,k}; z - \mu_{1,k}) \geq 0
			\quad \text{for all} ~ z \in \StdSmplx,
		\end{equation}
		where $f_{\gamma, \varepsilon}^\circ$ denotes Clarke's generalized directional derivative.
		
		\Else
		
		\If{$\Delta_k \geq \Delta_{\text{min}}$}
		
		\State\label{it:NUM-SoluSubprob} Compute an (inexact) solution $d_k$ of the constrained TR subproblem
		\begin{equation}
		\label{pr:NUM-ConstTRSubpr}
			\tag{Q$_k$}
			\begin{array}{rl}
				\inf\limits_{d} & q_k(d) \coloneqq f_{\gamma, \varepsilon}(\mu_{1,k}) + \scalarproduct{g_k}{d} + \frac12 d^\top H_k d \\[0.5em]
				\text{s.t.} & d \in \StdSmplx - \mu_{1,k}, ~ \norm{d} \leq \Delta_k
			\end{array}
		\end{equation}
		that satisfies the constrained Cauchy decrease condition
		\begin{align*}
			&f_{\gamma, \varepsilon}(\mu_{1,k}) - q_k(d_k) \\
			&\quad \geq \frac\nu{2R} \theta_R(\mu_{1,k}, g_k) \min \Bigl\{ R, \Delta_k, \frac{\theta_R(\mu_{1,k}, g_k)}{R \norm{H_k}} \Bigr\}.
		\end{align*}
		
		\State Compute the quality indicator
		\begin{equation*}
			\rho_k \coloneqq \frac{f_{\gamma, \varepsilon}(\mu_{1,k}) - f_{\gamma, \varepsilon}(\mu_{1,k} + d_k)}{f_{\gamma, \varepsilon}(\mu_{1,k}) - q_k(d_k)}.
		\end{equation*}
		
		\Else
		
		\State\label{it:NUM-SoluSubprobModi} Compute an (inexact) solution $\tilde{d}_k$ of the modified constrained TR subproblem
		\begin{equation}
		\label{pr:NUM-ModConstTRSubpr}
			\tag{$\tilde{\text{Q}}_k$}
			\begin{array}{rl}
				\inf\limits_{d}
				& \tilde{q}_k(d) \coloneqq f_{\gamma, \varepsilon}(\mu_{1,k}) + \phi(\mu_{1,k}, \Delta_k; d) + \frac{1}{2} \, d^\top H_k d \\[0.5em]
				\text{s.t.} & d \in \StdSmplx - \mu_{1,k}, ~ \norm{d} \leq \Delta_k,
			\end{array}
		\end{equation}
		that satisfies the modified constrained Cauchy decrease condition
		\begin{align*}
			&f_{\gamma, \varepsilon}(\mu_{1,k}) - \tilde{q}_k(\tilde{d}_k) \\
			&\quad \geq \frac{\nu}{2 R} \psi_R(\mu_{1,k}, \Delta_k) \min \Bigl\{ R, \Delta_k, \frac{\psi_R(\mu_{1,k}, \Delta_k)}{R \norm{H_k}} \Bigr\},
		\end{align*}
		with the modified stationarity measure $\psi_R$ being defined by
		\begin{equation*}
			\psi_R(\mu_{1,k}, \Delta_k)
			\coloneqq - \min_{d \in \StdSmplx - \mu_{1,k}, \norm{d} \leq R} \phi(\mu_{1,k}, \Delta_k, d).
		\end{equation*}
		
		\State Compute the modified quality indicator
		\begin{equation*}
			\rho_k
			\leftarrow \begin{cases}
				\displaystyle{\frac{f_{\gamma, \varepsilon}(\mu_{1,k}) - f_{\gamma, \varepsilon}(\mu_{1,k} + d_k)}{f_{\gamma, \varepsilon}(\mu_{1,k}) - \tilde{q}_k(d_k)}},
				& \begin{array}{l}
					\text{if} ~ \psi_R(\mu_{1,k}, \Delta_k) \\
					\quad > \theta_R(\mu_{1,k}, g_k) \Delta_k,
				\end{array} \\
				0, & \begin{array}{l}
					\text{if} ~ \psi_R(\mu_{1,k}, \Delta_k) \\
					\quad \leq \theta_R(\mu_{1,k}, g_k) \Delta_k.
				\end{array}
			\end{cases}
		\end{equation*}
		
		\EndIf
		
		\State\label{it:NUM-Update} \textbf{update:} Set
		\begin{equation*}
		\mu_{1, k+1}
			\leftarrow \begin{cases}
				\mu_{1,k}, & \text{if} ~ \rho_k \leq \eta_1 \\
				\mu_{1,k} + d_k, & \text{if} ~ \rho_k > \eta_1,
			\end{cases}
		\end{equation*}
		and
		\begin{equation*}
			\Delta_{k+1}
			\leftarrow \begin{cases}
				\beta_1 \Delta_k, & \text{if} ~ \rho_k \leq \eta_1, \\
				\max\{\Delta_{\min}, \Delta_k\}, & \text{if} ~ \eta_1 < \rho_k \leq \eta_2, \\
				\max\{\Delta_{\min}, \beta_2 \Delta_k\}, & \text{if} ~ \rho_k > \eta_2.
			\end{cases}
		\end{equation*}
		Set $k \leftarrow k+1$.
		
		\EndIf
		
		\EndFor
	\end{algorithmic}
\end{breakablealgorithm}
The presented algorithm gives rise to several remarks.
\begin{remark}
	\begin{itemize}
		\item As a model function in {\hyperref[it:Num-Init]{Step}\ \ref{it:Num-Init}}, we choose the function
		\begin{equation}
			\label{eq:NUM-ModelFunc}
			\phi(\mu_1, \Delta; d)
			\coloneqq \sup_{G \in \Gcal((\mu_1,\mu_2^\drm), \Delta)} \scalarproduct{p_G + \nabla_{\mu_1} \Jcal \bigl( \Scal_{\gamma, \varepsilon}(\mu_1, \mu_2^\drm), \mu_1 \bigr)}{d},
		\end{equation}
		where
		\begin{equation*}
			\Gcal(\mu, \Delta)
			\coloneqq \bigcup_{\xi \in \overline{B(\mu; \Delta)}} \partial_B \Scal_{\gamma, \varepsilon}(\xi)
		\end{equation*}
		denotes the collective Bouligand subdifferential, which collects all Bouligand subgradients of $\Scal_{\gamma, \varepsilon}$ in a ball around a given point, and $p_G$ corresponds to the first element of the tuple $G^* \nabla_\pi \Jcal \bigl( \Scal_{\gamma, \varepsilon}(\mu_1, \mu_2^\drm), \mu_1 \bigr) \in \R^{n_1} \times \R^{n_2}$, see \cref{pp:REDU-CompoDiff}. In some sense, the purpose of the model function $\phi$ is to collect first-order information in the vicinity of the current iterate to prohibit convergence to nonstationary points.
		
		Whether the construction of the model function from \cref{eq:NUM-ModelFunc} meets all the requirements specified in \cite[Assumption 6.9]{hillbrecht2024quadratic} is currently an open question. However, it can be proven that the collective Bouligand subdifferential $\Gcal$ satisfies the properties outlined in \cite[Assumption 4.1]{christof2020nonsmooth}, see \cite[Lemma 6.12]{hillbrecht2024quadratic}. In the unconstrained case, these properties are sufficient for the model function to meet requirements \textcolor{cyan}{which} are \textcolor{cyan}{just} the non-constrained counterparts of \cite[Assumption 6.9]{hillbrecht2024quadratic}. Therefore, it seems reasonable to adopt the same model function for the constrained case of \cref{pr:NUM-ReduTransIdentProb}.
		
		\item We compute the matrix $H_k$ in {\hyperref[it:NUM-gk]{Step}\ \ref{it:NUM-gk}} via BFGS update formula.
		
		\item If one chooses $R > 0$ large enough, then due to the structure of the standard simplex $\StdSmplx$ the calculation of the stationarity measure in {\hyperref[it:NUM-StatMeas]{Step}\ \ref{it:NUM-StatMeas}} reduces to solving a linear problem.
		
		\item The stopping criteria from {\hyperref[it:NUM-Stop]{Step}\ \ref{it:NUM-Stop}} of \cref{al:NUM-TrustRegionConstr} is just a necessary condition for local minima of the constrained optimization problem \cref{pr:NUM-ReduTransIdentProb}. This immediately follows from the definition of Clark's generalized directional derivative.
		
		\item In {\hyperref[it:NUM-SoluSubprob]{Step}\ \ref{it:NUM-SoluSubprob}}, we obtain an inexact solution of the constrained TR subproblem \cref{pr:NUM-ConstTRSubpr} by computing a minimizing convex combination of
		\begin{enumerate}
			\item the direction that realizes the minimum in the calculation of the stationarity measure in {\hyperref[it:NUM-StatMeas]{Step}\ \ref{it:NUM-StatMeas}}, the latter of which can be seen as a linearization of \cref{pr:NUM-ConstTRSubpr};
			
			\item the projection of the dogleg step, which corresponds to the TR subproblem without the linear constraints, onto the standard simplex.
		\end{enumerate}
		
		\item It is currently not clear whether there exists a manageable representation of the (possibly uncountable) collective Bouligand subdifferential $\Gcal$ from above. Therefore, we cannot compute neither a global solution of the modified constrained TR subproblem \cref{pr:NUM-ModConstTRSubpr} nor the modified stationarity measure $\psi_R$ from {\hyperref[it:NUM-SoluSubprobModi]{Step}\ \ref{it:NUM-SoluSubprobModi}} exactly, but have to rely on an approximations thereof. We obtain this approximations by iteratively exploring the ball around the current iterate and collecting the corresponding Bouligand subgradients to find an approximation of the collective Bouligand subdifferential which we then use to approximate the model function at a given point. (We know these are a lot of approximations, but the modified constrained TR subproblem is only supposed to act as a ``safeguard''.)
	\end{itemize}
\end{remark}
The authors provide an actual implementation of this algorithm on GitHub: \url{https://github.com/sebastianhillbrecht/cntr_method}. Note, however, that the implementation of {\hyperref[it:NUM-SoluSubprobModi]{Step}\ \ref{it:NUM-SoluSubprobModi}} is only approximate and therefore offers no theoretical guarantee of convergence.


\subsection{Results of the Numerical Experiments}\label{sc:RESU}
For the first numerical experiment in the framework of the transportation identification problem \cref{pr:NUM-TransIdentProb}, we set $n_1 = n_2 = 25$ and choose random marginals $\mu_1^*, \mu_2^\drm \in \R^{n_1}$, which are nonnegative, occupied to roughly $50\%$, and sum to $1$. \textcolor{cyan}{We then} compute an optimal transport plan $\pi^*$ which is transporting $\mu_1^*$ onto $\mu_2^\drm$ \wrt the cost given by $c_\drm(i_1, i_2) = |i_1 - i_2|^2$. The resulting variables are shown in \cref{fg:NUM-InitialData}.
\begin{figure}
	\centering
	\begin{subfigure}{0.32\textwidth}
		\includegraphics[width=\textwidth]{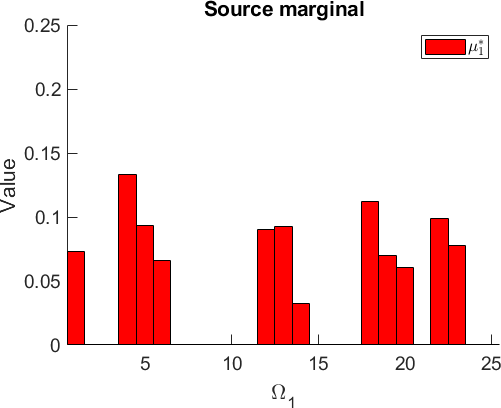}
		\caption{}
	\end{subfigure}
	\hfill
	\begin{subfigure}{0.25\textwidth}
		\includegraphics[width=\textwidth]{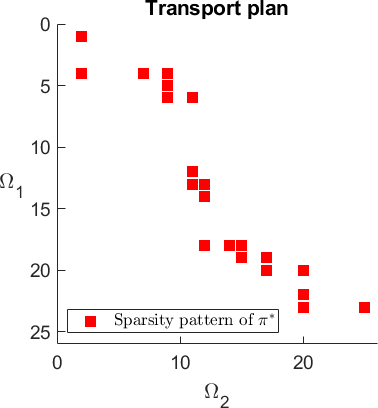}
		\caption{}
		\label{fg:NUM-PiStar}
	\end{subfigure}
	\hfill
	\begin{subfigure}{0.32\textwidth}
		\includegraphics[width=\textwidth]{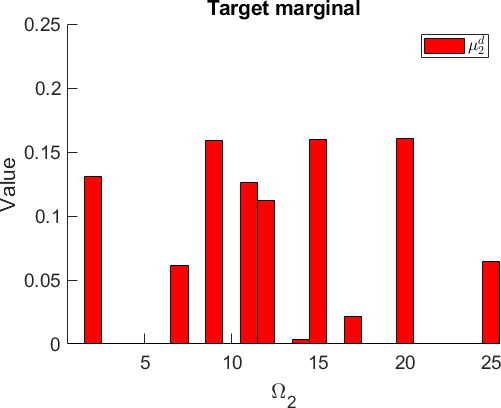}
		\caption{}
	\end{subfigure}
	\caption{Randomly generated (semi-sparse) data. \cref{fg:NUM-PiStar} shows the sparsity pattern of the optimal transport plan $\pi^*$.}
	\label{fg:NUM-InitialData}
\end{figure}
We then choose the observation domains $D_1 = \Omega_1$ and $D = \Omega$, the exact observations $\mu_1^\drm = \mu_1^*$ and $\pi_\drm = \pi^*$, as well as the weight $\lambda = 1$. As already mentioned, in this setting, the unique solution of the transportation identification problem \cref{pr:NUM-TransIdentProb} is given by the couple $(\pi^*, \mu_1^*)$.

\cref{fg:NUM-Test1Mu1Approx} shows the evolution of the cluster point $\bar{\mu}_1$ of \cref{al:NUM-TrustRegionConstr} that we applied to solve the reduced transportation identification problem \cref{pr:NUM-ReduTransIdentProb} and \cref{fg:NUM-Test1PiApprox} shows the corresponding optimal transport plan $\bar{\pi}$ for different choices of the regularization parameters $\gamma$ and $\varepsilon$. For the constrained nonsmooth TR method, we chose the standard parameter configuration $R = \sqrt{n_1}$, $\Delta_{\min} = 10^{-6}$, $\eta_1 = 0.1$, $\eta_2 = 0.9$, $\beta_1 = 0.5$, $\beta_2 = 1.5$, and $\nu = 1$. The initial point and the initial TR radius were set to be $\mu_{1, 0} = {n_1}^{-1} \one$ and $\Delta_0 = 1$, respectively, for every application of the method. We set the stationarity tolerance for the termination criterion in {\hyperref[it:NUM-Stop]{Step}\ \ref{it:NUM-Stop}} to $\verb*|TOL| = 10^{-5}$. This tolerance was achieved after a maximum of $70$ iterations in each test run shown.
\begin{figure}
	\centering
	\begin{subfigure}{0.32\textwidth}
		\includegraphics[width=\textwidth]{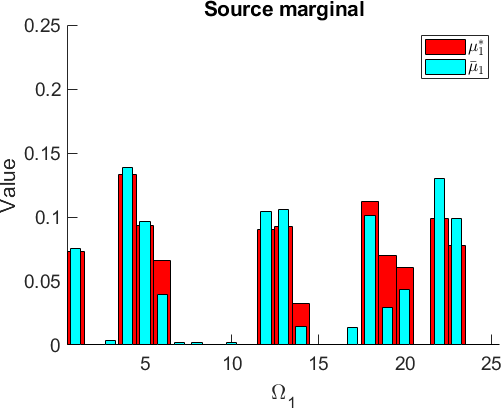}
		\caption{$\gamma = \varepsilon = 10^{-2}$}
		\label{fg:NUM-Test1Mu1Approx-1}
	\end{subfigure}
	\hfill
	\begin{subfigure}{0.32\textwidth}
		\includegraphics[width=\textwidth]{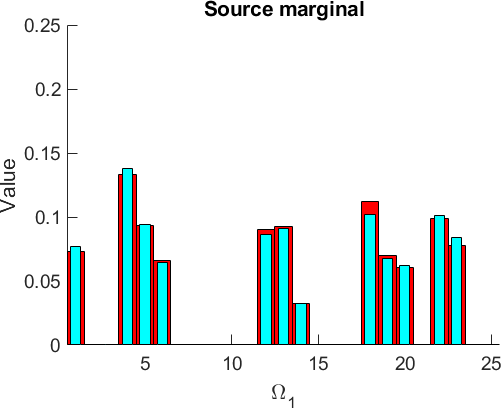}
		\caption{$\gamma = \varepsilon = 10^{-4}$}
		\label{fg:NUM-Test1Mu1Approx-4}
	\end{subfigure}
	\hfill
	\begin{subfigure}{0.32\textwidth}
		\includegraphics[width=\textwidth]{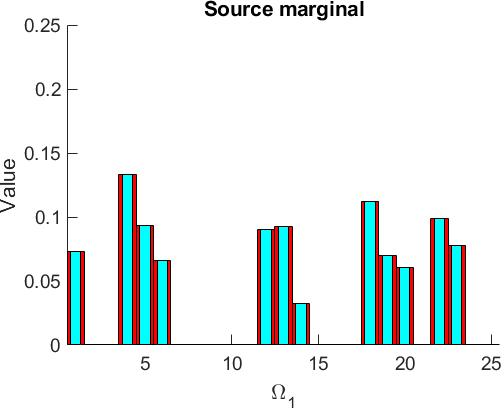}
		\caption{$\gamma = \varepsilon = 10^{-6}$}
		\label{fg:NUM-Test1Mu1Approx-6}
	\end{subfigure}
	\caption{Estimated source marginal $\bar{\mu}_1$ (blue bars) compared to the ``true'' source marginal $\mu_1^*$ (red bars) for different values of regularization parameters $\gamma$ and $\varepsilon$.}
	\label{fg:NUM-Test1Mu1Approx}
\end{figure}
\begin{figure}
	\centering
	\begin{subfigure}{0.3\textwidth}
		\includegraphics[width=\textwidth]{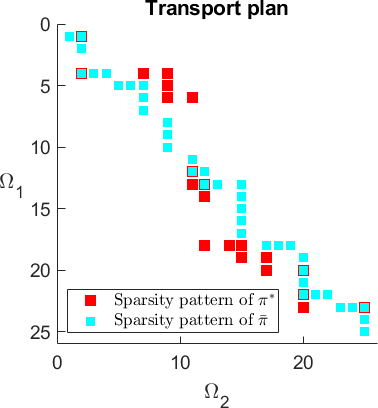}
		\caption{$\gamma = \varepsilon = 10^{-2}$}
	\end{subfigure}
	\hfill
	\begin{subfigure}{0.3\textwidth}
		\includegraphics[width=\textwidth]{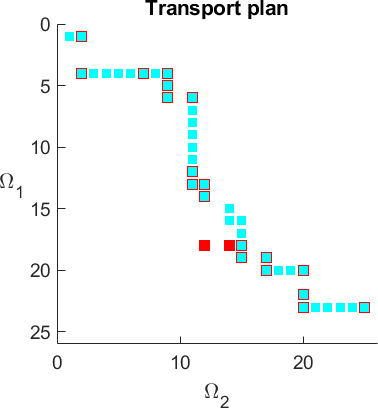}
		\caption{$\gamma = \varepsilon = 10^{-4}$}
	\end{subfigure}
	\hfill
	\begin{subfigure}{0.3\textwidth}
		\includegraphics[width=\textwidth]{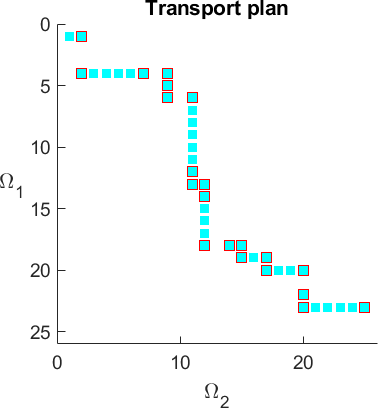}
		\caption{$\gamma = \varepsilon = 10^{-6}$}
	\end{subfigure}
	\caption{Comparison of the sparsity pattern (blue squares) of the optimal transport plans $\bar{\pi}$ corresponding to the estimated source marginals $\bar{\mu}_1$ from \cref{fg:NUM-Test1Mu1Approx} with the sparsity pattern (red squares) of the ``true'' optimal transport plan $\pi^*$ for different values of regularization parameters $\gamma$ and $\varepsilon$.}
	\label{fg:NUM-Test1PiApprox}
\end{figure}

We observe that even with relatively large regularization parameters (\ie $\gamma = \varepsilon = 10^{-2})$ the source marginal $\mu_1^*$ is reasonably approximated, see \cref{fg:NUM-Test1Mu1Approx-1}, and the quality of approximation becomes even better for declining regularization parameters, see \cref{fg:NUM-Test1Mu1Approx-4} -- \ref{fg:NUM-Test1Mu1Approx-6}. When examining the corresponding optimal transport plans, it can be seen that the approximation is inaccurate for larger regularization parameters, but improves significantly up to a point where the sparsity pattern of $\pi^*$ is completely captured, see \cref{fg:NUM-Test1PiApprox}. This (visual) observation is underpinned by the data given in \cref{tb:NUM-Test1Data}.
\begin{table}
	\centering
	\begin{tabular}{c|c|c|c}
		$\gamma = \varepsilon$ & $\max(|\mu_1^* - \bar{\mu}_1|)$ & $\max(|\pi^* - \bar{\pi}|)$ & $\Jcal(\bar{\pi}, \bar{\mu}_1)$ \\[0.5ex]
		\hline
		\vphantom{$M^{M^M}$}$10^{-2}$ & $3.9855 \cdot 10^{-2}$ & $9.9722 \cdot 10^{-2}$ & $2.9307 \cdot 10^{-2}$ \\
		$10^{-4}$ & $1.0519 \cdot 10^{-2}$ & $8.9377 \cdot 10^{-2}$ & $2.6759 \cdot 10^{-4}$ \\
		$10^{-6}$ & $1.2200 \cdot 10^{-4}$ & $7.1756 \cdot 10^{-5}$ & $3.4216 \cdot 10^{-8}$
	\end{tabular}
	\caption{Collection of end-of-iteration data of the first numerical experiment for different values of regularization parameters $\gamma$ and $\varepsilon$.}
	\label{tb:NUM-Test1Data}
\end{table}

For the second experiment, we reuse the data (\ie the marginals, the cost matrix, and the optimal transport plan) from the first experiment but now consider different observation domains. In particular, we set $D_1 = \{9, \dots, 15\}$ and define $D$ to correspond to a band matrix with upper and lower bandwidth of $3$. The observed variables $\mu_1^\drm$ and $\pi_\drm$ are defined to be the restrictions of $\mu_1^*$ and $\pi^*$ to $D_1$ and $D$, respectively. Again, $(\pi^*, \mu_1^*)$ is a solution to the corresponding transportation identification problem \cref{pr:NUM-TransIdentProb}.

We again use the standard parameter configuration of the TR method. Similarly to before, \cref{fg:NUM-Test2Mu1Approx} shows the evolution of the cluster point $\bar{\mu}_1$ and \cref{fg:NUM-Test2PiApprox} shows the corresponding optimal transport plan $\bar{\pi}$ for different choices of the regularization parameters $\gamma$ and $\varepsilon$. In contrast to the previous experiment, the TR method exceeded the iteration limit of $200$ iterations in two of the three tests presented.
\begin{figure}
	\centering
	\begin{subfigure}{0.3\textwidth}
		\includegraphics[width=\textwidth]{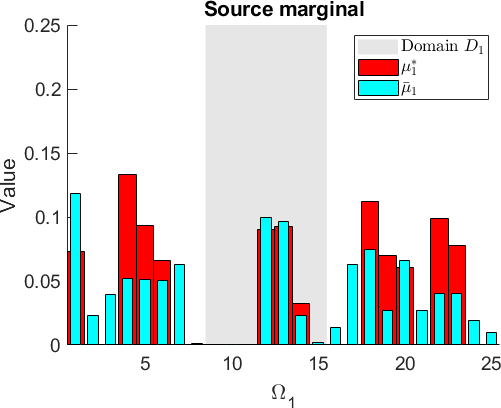}
		\caption{$\gamma = \varepsilon = 10^{-2}$}
	\end{subfigure}
	\hfill
	\begin{subfigure}{0.3\textwidth}
		\includegraphics[width=\textwidth]{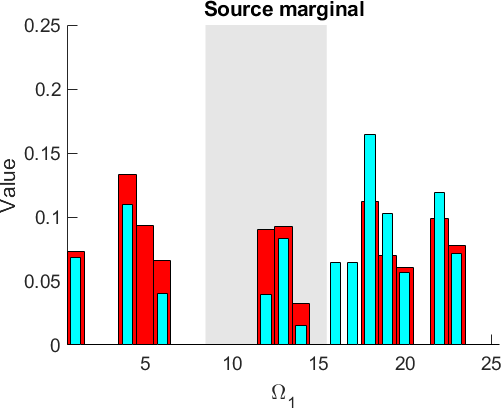}
		\caption{$\gamma = \varepsilon = 10^{-4}$}
	\end{subfigure}
	\hfill
	\begin{subfigure}{0.3\textwidth}
		\includegraphics[width=\textwidth]{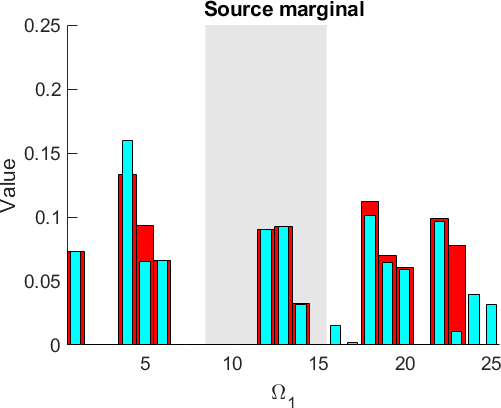}
		\caption{$\gamma = \varepsilon = 10^{-6}$}
	\end{subfigure}
	\caption{Estimated source marginal $\bar{\mu}_1$ (blue bars) compared to the ``true'' source marginal $\mu_1^*$ (red bars) for different values of regularization parameters $\gamma$ and $\varepsilon$. The gray region indicates the observation domain $D_1$.}
	\label{fg:NUM-Test2Mu1Approx}
\end{figure}
\begin{figure}
	\centering
	\begin{subfigure}{0.3\textwidth}
		\includegraphics[width=\textwidth]{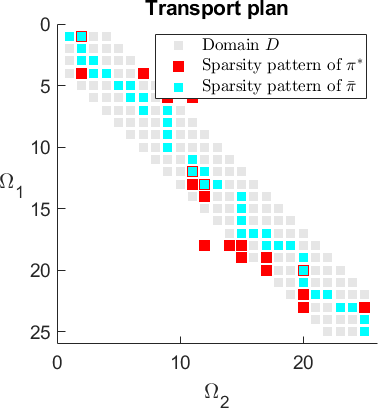}
		\caption{$\gamma = \varepsilon = 10^{-2}$}
	\end{subfigure}
	\hfill
	\begin{subfigure}{0.3\textwidth}
		\includegraphics[width=\textwidth]{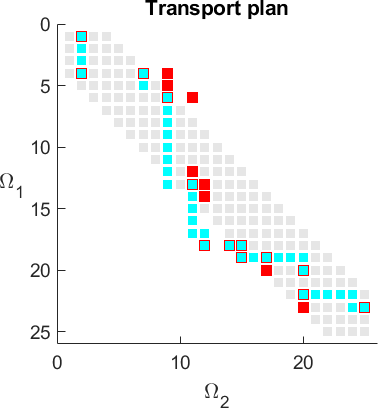}
		\caption{$\gamma = \varepsilon = 10^{-4}$}
	\end{subfigure}
	\hfill
	\begin{subfigure}{0.3\textwidth}
		\includegraphics[width=\textwidth]{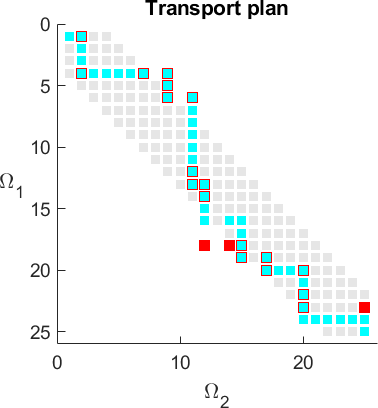}
		\caption{$\gamma = \varepsilon = 10^{-6}$}
	\end{subfigure}
	\caption{Comparison of the sparsity pattern (blue squares) of the optimal transport plans $\bar{\pi}$ corresponding to the estimated source marginals $\bar{\mu}_1$ from \cref{fg:NUM-Test1Mu1Approx} with the sparsity pattern (red squares) of the ``true'' optimal transport plan $\pi^*$ for different values of regularization parameters $\gamma$ and $\varepsilon$. The gray region indicates the observation domain $D$.}
	\label{fg:NUM-Test2PiApprox}
\end{figure}

Again, we find that the quality of the approximation of both the source marginal and the corresponding optimal transport plan increases when the regularization parameters are reduced, see \cref{tb:NUM-Test2Data}. Moreover, it seems that we can even (to some extent) approximate both variables outside the observation domain. We suspect that this behavior is due to the fact that the support of the transport plan $\pi^*$ lies to a large extent in the observation domain $D$ and that the relationship between marginals and transport plan is continuous. However, if we compare the objective function values of the two experiments, see the last columns of \cref{tb:NUM-Test1Data} and \cref{tb:NUM-Test2Data}, we find that the quality of the approximation is several orders of magnitude worse in the latter case. However, this is not surprising since in the first experiment we had complete information (encoded in the objective function and its derivatives) about the source marginal and the optimal transportation plan, while in the second experiment there was a great lack of information about the source marginal.
\begin{table}
	\centering
	\begin{tabular}{c|c|c|c}
		$\gamma = \varepsilon$ & $\max(|\mu_1^* - \bar{\mu}_1|)$ & $\max(|\pi^* - \bar{\pi}|)$ & $\Jcal(\bar{\pi}, \bar{\mu}_1)$ \\[0.5ex]
		\hline
		\vphantom{$M^{M^M}$}$10^{-2}$ & $8.1087 \cdot 10^{-2}$ & $9.9722 \cdot 10^{-2}$ & $2.7878 \cdot 10^{-2}$ \\
		$10^{-4}$ & $9.3342 \cdot 10^{-2}$ & $9.3342 \cdot 10^{-2}$ & $1.0152 \cdot 10^{-2}$ \\
		$10^{-6}$ & $6.6571 \cdot 10^{-2}$ & $6.4498 \cdot 10^{-2}$ & $3.1405 \cdot 10^{-3}$
	\end{tabular}
	\caption{Collection of end-of-iteration data of the second numerical experiment for different values of regularization parameters $\gamma$ and $\varepsilon$.}
	\label{tb:NUM-Test2Data}
\end{table}


\section*{Acknowledgements}

The author would like to thank Prof.\ Gerd Wachsmuth for valuable comments and discussions on an earlier version of this manuscript. In particular, the author is grateful for an observation that led to a considerably shorter proof of the existence of a recovery sequence.

\bibliographystyle{jnsao.bst}
\bibliography{refs.bib}

\end{document}
